\documentclass[12pt,a4paper]{article}
\synctex=1
\usepackage{amsfonts}
\usepackage{amsmath}
\usepackage{amsbsy}
\usepackage{amsxtra}
\usepackage{latexsym}
\usepackage{amssymb}
\usepackage{amsthm}
\usepackage{url}
\usepackage{cite}
\usepackage{pstricks,pst-node}
\usepackage{enumerate}
\textwidth 16cm
\textheight 22cm

\makeatletter
\g@addto@macro\thesection
\makeatother

\newtheorem{theorem}{Theorem}
\newtheorem*{theorem*}{Theorem}
\newtheorem*{sht}{Supporting hyperplane theorem}
\newtheorem*{hst}{Hyperplane separation theorem}
\newtheorem*{flm}{The Farkas lemma}

\newtheorem*{shst}{Special hyperplane separation theorem}
\newtheorem{lemma}{Lemma}
\newtheorem{corollary}{Corollary}
\newtheorem{proposition}{Proposition}
\newtheorem{remark}{Remark}
\newtheorem{definition}{Definition}
\newtheorem{example}{Example}
\newtheorem{assertion}{Assertion}

\DeclareMathOperator{\cone}{cone}

\DeclareMathOperator{\bdr}{bdr}
\DeclareMathOperator{\co}{co}

\DeclareMathOperator{\argmin}{argmin}

\DeclareMathOperator{\spa}{span}




\newcommand{\lng}{\langle}
\newcommand{\rng}{\rangle}
\newcommand{\lf}{\left}
\newcommand{\rg}{\right}

\newcommand{\R}{\mathbb R}

\newcommand{\mc}{\mathcal}

\newcommand{\si}{\sigma}
\newcommand{\Si}{\Sigma}

\newcommand{\tp}{^\top}

\newcommand{\cc}{^\circ}

\newcommand{\Hyp}{\mathcal H}

\newcommand{\Conv}{\mathcal C}

\newcommand{\Kup}{\mathcal K}

\newcommand{\Aa}{\mathcal A}

\newcommand{\Be}{\mathcal B}

\newcommand{\f}{\frac}

\begin{document}
\sffamily

\date{}

\title{About the kernel of the strongly quasiconvex function generated projection}
\author{A. B. N\'emeth\\Faculty of Mathematics and Computer Science\\Babes Bolyai University, Str. Kog\u alniceanu nr. 1-3\\RO-400084
Cluj-Napoca, Romania\\email: nemab@math.ubbcluj.ro \and S. Z. N\'emeth\\School of Mathematics, The University of Birmingham\\The Watson Building,
Edgbaston\\Birmingham B15 2TT, United Kingdom\\email: s.nemeth@bham.ac.uk}

\maketitle

\begin{abstract}

This paper explores a natural generalization of Euclidean projection through the lens of strongly quasiconvex functions, as developed in prior works. By establishing a connection between 
strongly quasiconvex functions and the theory of mutually polar mappings on convex cones, we integrate this generalized projection concept into the duality framework of Riesz spaces, vector 
norms, and Euclidean metric projections. A central result of this study is the identification of conditions under which the null space of a projection onto a closed convex cone forms a closed 
convex cone. We provide a comprehensive characterization of such cones and projections, highlighting their fundamental role in extending the duality theory to generalized projection operators.

\end{abstract}

\section{Introduction}

The most natural generalization of Euclidean projection is the concept of projection defined with the help of strongly quasiconvex functions, as interpreted in the works \cite{FN} and 
\cite{FN1}.

The starting point of our present work is the realization that the model relying on strongly 
quasiconvex functions creates a natural connection with the theory of mutually polar mappings
on convex cones. In this way, this concept can be integrated into the duality theory 
encompassing Riesz spaces, vector norms, and Euclidean metric projection.

The generalized projection derived from mutually polar mappings can perfectly fit
into the above-mentioned theory if and only if the null space of the projection
onto the closed convex cone, $P$, i.e., $\ker P =\{x \in \R^n : Px = 0\}$, is also a closed 
convex cone.

In this paper, we characterize those cones and projections for which the null space of the projection is a closed convex cone.

In Section 2, we introduce the terminology used throughout the paper. Section 3 defines the
generalized projection, explores related philosophical considerations, and presents several
examples. Section 4 introduces the concept of mutually polar retractions and provides
corresponding examples. This section also specializes the concept of mutually polar retractions
to generalized projections and addresses the significant challenges associated with projecting 
onto more general cones. Section 6 examines symmetric projections on a minimal wedge. Finally, 
some of the results from Section 6 are extended to general convex cones using the concept of 
the bipolar projector.
\section{Terminology}

Let $\lf(\R^n,\lng\cdot,\cdot\rng\rg)$ (or simply $\R^n$) be the Euclidean
$n$-space equipped with a fixed orthonormal basis $(e^1,\dots,e^n)$, called
standard basis and the corresponding
standard Euclidean inner product $\lng\cdot,\cdot\rng$, given by
\[\lng x,y\rng=x_1y_1+\dots+x_ny_n,\quad\forall x,y\in\R^n,\] where for any 
$z\in\R^n$ the components 
$z_i\in\R$, $i\in\{1,\dots,n\}$ with respect to the standard basis are defined
by \[z=z_1e^1+\dots+z_ne^n.\] Denote the standard Euclidean norm coresponding to
the standard Euclidean inner product by $\|\cdot\|$, that is 
\[\|x\|=\lf( x_1^2+\dots+x_n^2\rg)^{\f12},\quad\forall x\in\R^n.\]

All the topological notion used next will be with respect to
the unique Hausdorff locally convex topology on $\R^n$ defined by this norm. 

For convenience we can identify the elements of $\R^n$ with column vectors 
$(x_1,\dots,x_n)\tp$. 

We need in the proofs some well known 
standard notions (convex set, hyperplane, cone etc.) and some classical results on cones
and convex sets from functional analysis and convex geometry
contained in the monograph \cite{Rockafellar1970} which are well summarized in \cite{Soltan2021}.

The set $\Hyp\subseteq \R^n$ is said \emph{a hyperplane with the normal
$a,\,a\in\R^n, \|a\|=1$} if 
$$\Hyp=\{x\in\R^n:\,\lng a,x \rng=0\}.$$ A hyperplane defines two 
\emph{closed halfspaces} defined by
$$\Hyp_+=\{x\in\R^n:\,\lng a,x \rng \geq 0\},\quad
\Hyp_-=\{x\in\R^n:\,\lng a,x \rng \leq 0\},$$
and two \emph{open halfspaces} defined by
$$\Hyp^+=\{x\in\R^n:\,\lng a,x \rng > 0\},\quad
\Hyp^-=\{x\in\R^n:\,\lng a,x \rng < 0\}.$$

The translate of the hyperplane $\Hyp$ (halfspace $\Hyp_+,\,\,\Hyp^-$, etc.), i.e.,
the sets $\Hyp +u \,\,(\Hyp_+ +u,\,\Hyp^- +u, $ etc.) are also called hyperplane (halfspace).
Sometimes we also use the notations \textit{}$\Hyp_-(a)=\Hyp_-$ (similarly for the hyperplane
and the other halfspaces) to emphasise that the normal of $\Hyp$ is $a$.

The nonempty  set  $\Conv \subseteq \R^n$ is called \emph{convex} if
$$u,\,v\in \Conv\,\implies \,[u,v]\subseteq \Conv \textrm{, where }
[u,v]=\{tu+(1-t)v :\, t \in [0,1] \subseteq \R\}.$$
If $\Aa \subseteq \R^n$ is a nonempty set,
$$\co \Aa$$
is he smallest  convex set containing $\Aa $ and is called
\emph{the convex hull of $\Aa$.}

The convex set $\Conv$ is called \emph{strictly convex} if from $u,\, v \in \bdr
\Conv$,
$[u,v]\subseteq \bdr \Conv$ it follows that $u=v$, that is, the boundary of $\Conv$ cannot
contain line segments.

\begin{sht} The following statements hold.
	\begin{enumerate}[(i)]
		\item If $\Conv$ is a convex set and $x_0\in \bdr \Conv$, 
			then there exists a hyperplane $\Hyp$
			with the normal $a$ such that 
			$\Conv \subseteq \Hyp_-$, that is,
			$\lng a,x \rng \le \lng a, x_0 \rng,\,
			\forall \,x\in \Conv.$
		\item If $\Conv$ is a strictly convex set and $x_0\in \bdr \Conv$, 
			then there exists a hyperplane $\Hyp$ with the normal $a$ 
			such that $\Conv \subseteq \Hyp_- ,\,\Hyp \cap
			\Conv=\{x_0\}$, that is, $\lng a,x \rng < 
			\lng a, x_0 \rng,\,\forall \,x\in \Conv,\,x\ne x_0.$
	\end{enumerate}
\end{sht}

\begin{hst}
 Let $\Aa$  and $\Be$ be two disjoint, nonempty convex subsets of $\R^n$.
Then, there exists an element $u \in \R^n$ and a hyperplane $\Hyp$ with the normal $a$ such that
$$\Aa \subseteq \Hyp_- +u,\,\, \Be \subseteq \Hyp_+ +u,$$
that is
$$\lng a,x\rng \leq \lng a,u \rng \leq \lng a, y \rng,\,\forall\, x\in \Aa,\,\,\forall \,y \in \Be.$$
\end{hst}
The boundary $\Si$ of a bounded closed convex set with interior points will be called
\emph{closed convex surface.} By the  supporting hyperplane theorem
in each point $x\in \Si$ there exists a supporting hyperplane 
(a \emph{tangent hyperplane})  $\Hyp+x$ with the normal $a$
such that $\Si \subseteq \Hyp_-+x$. The surface is said \emph{smooth} if at each point of $x\in \Si$
there exists a single supporting hyperplane 
with the above property. In this case, the normal $a$ is called \emph{the gradient of $\Si$ at the point $x$}
and is denoted by $\nabla x$ or more specifically $\nabla_{\Si} x$.
The set $\Si$ is called \emph{strictly convex} if it does not contain line segments.

 The following special separation theorem will be frequently used in proofs:

\begin{shst} 
If $\Si$ is a smooth strictly convex surface, $\Conv$ is a nonempty 
convex set with the property that
$\Conv \cap \co \Si= \{x\},$ then the tangent hyperplane $\Hyp$ at $x\in \Si$ of $\Si$
is the separating hyperplane
of $\Conv$ and $\Si$ with
$\Conv \subseteq \Hyp_+ +x$ and $\Si \setminus \{x\}\subseteq \Hyp^-.$
\end{shst}

The set

$$\mc S^n= \{x\in \R^n : \|x\|=1\} $$
is called the \emph{standard Euclidean unit sphere}.

The nonempty set $\Kup \subseteq \R^n$ is called a \emph{cone} if 
\begin{enumerate}[(i)]
	\item $\lambda x\in \Kup$, for all $x\in \Kup$ and $\lambda \in \R_+$ and if 
	\item $\mc S^n \cap \Kup$, is an arcvise connected set. 
\end{enumerate}

$\Kup$ is called \emph{convex cone} if it satisfies

\begin{enumerate}[(i)]
	\item $\lambda x\in \Kup$, for all $x\in \Kup$ , $\lambda \in \R_+$ and if 
   	\item $x+y\in \Kup$, for all $x,y\in \Kup$. 
\end{enumerate}

A convex cone is always a cone, but  for $n\geq 3$ the two notions differ .

A convex cone $\Kup$ is called \emph{pointed} if $\Kup\cap (-\Kup)=\{0\}$. 

A convex cone is called \emph{generating} if $\Kup-\Kup=\R^n$. 

If $\Aa \subseteq \R^m$ is a nonempty set, then
$\cone \Aa$ is the smallest convex cone containing $\Aa$ and it is called 
the \emph{the convex conical hull of $\Aa$}.

The convex cone
$$\Kup= \cone\{x^1,\,...,\,x^n\}$$
with the $n$ linearly independent elements $x^1,\,...,\,x^n \in \R^n$ , 
is called a \emph{simplicial cone}.

The special case
$$\R^n_+:=\cone \{e^1,\,...\,,e^n\}$$
with $e^1,...,e^n$ the standard orthonormal base of $\R^n$ is called the \emph{positive orthant of $\R^n$}. The interior of $\R^n_+$ is denoted
by $\R^n_{++}$.

If $\Kup \subseteq \R^n$ is a convex cone, then the set
$$ \Kup^\circ =\{y\in \R^n : \lng y,x\rng \leq 0,\,\forall x\in \Kup\}$$
is called \emph{the polar of $\Kup$}. The polar is always a closed convex cone.
$\Kup^*= -\Kup^\circ$ is called the \emph{dual } of $\Kup$.

\begin{flm}
If $\Kup \subseteq \R^n$ is a closed convex cone, then $\Kup^{\circ \circ}= \Kup$.
Hence, $\Kup$ and $\Kup^\circ$ are polar to each other.
\end{flm}

\begin{definition}\label{metric}
Consider the function $\si :\R^n \to \R_+$ and the following conditions on it:
\begin{enumerate}
	\item[(i)] Positive definiteness: $\si (x)>0$ if $x\not= 0$ and $\si(0)=0,$
	\item[(ii)] Non-negative homogeneity: $\si(tx)=t\si(x)$ for every $x\in \R^n$
		and every $t\in\R_+,$   
	\item[(iii)] Subadditivity: $\si(x+y)\leq \si(x)+\si(y)$ for every $x,\,y \in \R^n$,
	\item[(iv)] Symmetry:  $\si(-x)=\si(x)$ for every $x\in \R^n$.
\end{enumerate}

The function $\si:\R^n \to \R_+$ satisfying (i)-(iii) is called \emph{an asymmetric norm}. 
\medskip

The function $\si:\R^n \to \R_+$ satisfying (i)-(iv) is called \emph{a norm}.
\medskip

The set
\[\mathbb B =\{x\in \R^n :\si (x) \leq 1\}\] is called \emph{the unit ball}, while 
the set \[\Si =\{x\in \R^n :\si (x) = 1\}\] \emph{the unit sphere} of $\si$.
\end{definition}

Next, we will fix some standard terminology and abbreviations on the geometric 
notions which will be used later. For fixing the terminology, we list some well
known basic results (without citation) which are contained in almost all textbooks
on functional analysis and convex analysis. 

Let $2^{\R^n}$ be the power set of $\R^n$,
that is the family of all subsets of $\R^n$.

\begin{definition}\label{geot1}
	$\,$

\begin{enumerate}
 \item [(i)] $\Aa$ and $\Be$ in $2^{\R^n}$ are called \emph{translation equivalent}
(denote $\Be \equiv \Aa$ ) if there exists an element $v\in \R^n$ such that
$\Be = \Aa +v$ (and hence  $\Aa = \Be- v$).
The relation $\equiv$ is obviously an equivalence relation on $2^{\R^n}$.

\item [(ii)] $\Aa$ and $\Be$ in $2^{\R^n}$ are called \emph{positive homothetic equivalent}
(denote $\Be \sim \Aa$)
if there exists a point in $u\in \R^n$ and $t>0$ such that $t(\Aa-u)=\Be-u$
(and hence $(1/t)(\Be-u) = \Aa-u$).

The relation $\sim$ is also an equivalence relation on  $2^{\R^n}$.
Here $u$ is called \emph{the center of the homothety}.
Considering the translation as a homothety with center at the infinity, 
by unifying the above two relations into one denoted by $\simeq$ and calling
the unified relation {\em similarity}, we conclude that $\simeq$ is a
reflexive transitive and symmetric relation.
\end{enumerate}
\end{definition}

\begin{assertion}
	If $\Aa \sim \Be$, then the similarity relation is one to one
correspondence of points of $\Aa$ and points of $\Be$ 
(which preserves the "shape" of the sets).
\end{assertion}

\begin{assertion}\label{utolag}

The similar image of hyperplanes, halfspaces are hyperplanes, halfspaces.

If $\Si_i,\, i = 1, 2$ are two similar images of $\Si\subseteq \R^n$  which 
is a closed smooth surface and we have the homothety
$$ t(\Si_1-u)=\Si_2-u,$$ $x\in \Si_1$ and $y = u+t(x-u)\in \Si_2$, then 
$\nabla_{\Si_1}x$ is equal to $\nabla_{\Si_2} y.$
The assertion also holds in the case of a translation.
\end{assertion}


\section{The generalized projection: definition, philosophy, examples}

 A continuous function   $\varphi:\R^n \to \R$  is called \emph{ strongly quasiconvex}  if for each $\lambda \in (0,1)$
and each $x, y \in \R^n $ with  $x\neq y$ we have 
$$
\varphi ( \lambda x+(1-\lambda)y) < \max \{ \varphi (x), \varphi (y)\}.
$$
 
It follows from the definition that the function $\varphi$ is  strongly quasiconvex if and only if all of its  
nonempty sublevel sets $\{x\in \R^n : \varphi (x)\leq L \}$ for $L \in \R$,  are  
strictly convex (remember that $ \varphi $ is continuous). Moreover, for an arbitrary
nonempty convex set $\Conv$ ,
for each $x\in \R^n$ the  function  $\varphi_x(y) = \varphi(x-y)$ has only one minimizer in $\Conv$,
 see Theorem~\(3.5.9\) of \cite{Baz79}.

\begin{definition}\label{foodef1}

Suppose that $\varphi$ is a strongly quasiconvex, continuous,
nonnegative function with $\varphi(0)=0.$
Suppose $\Conv \subseteq \R^n$ is a closed and convex cone 
Then, for any  $x\in \R^n$ there is a well defined
mapping $P:\R^n\to \Conv$ called \emph{the $\varphi$-projection of $x$ onto $\Conv$}
given by the formula:
\begin{equation}\label{fookepl}
Px=\argmin \{\varphi(x-y): y\in \Conv\}.
\end{equation}
Hence, $P$ depends on $\varphi$ and $\Conv $ and if necessary 
we will also use the notation $P_{\Conv}^{\varphi}.$

The \emph{distance of $x$ from $\Conv$} is defined by
\begin{equation}\label{foodef2} 
d(x,\Conv)= \varphi(x-Px).
\end{equation}

\end{definition}

\begin{remark}\label{remm1}
It is immediate from the formula (\ref{fookepl}) that the same generalized projection
is defined by many functions. For instance if $f: \R_+ \to \R_+$ is a continuous,
increasing, strongly quasiconvex function with $f(0)=0$,
then $P_\Conv^{\varphi} = P_\Conv^{f\circ \varphi}$.
\end{remark}

\begin{remark}\label{philosoph}

The above defined generalized projection is a natural 
generalization of the Euclidean metric projection
since this is precisely the projection
with respect to \[\si(x) :=\|x\|= \lf(x_1^2+...+x_n^2\rg)^{1/2}.\] 

\end{remark}


\subsection{The initial result}

\begin{theorem}\label{footet}
If $\varphi$ is a strongly  quasiconvex, continuous, nonnegative function with $\varphi(0)=0$
and if $\Conv$ is a closed convex cone, then for the $\varphi$-projection $P$ defined by  
(\ref{fookepl}) the following equalities hold:
\begin{equation}\label{foo1}
	P(I-P)=0,\quad (I-P)(\R^n)=\ker(P):=\{x\in\R^n:P(x)=0\}.
\end{equation}
\end{theorem}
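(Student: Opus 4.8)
The plan is to derive both identities in (\ref{foo1}) from two elementary facts: that a convex cone is closed under addition (so $0\in\Conv$ and $Px+y\in\Conv$ for every $y\in\Conv$), and that, by strong quasiconvexity, the minimizer in (\ref{fookepl}) is unique (Theorem~3.5.9 of \cite{Baz79}, already invoked above). No further property of $\varphi$ beyond well-definedness of $P$ will really be needed.

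First I would establish $P(I-P)=0$, i.e.\ $P(x-Px)=0$ for every $x\in\R^n$. Fix $x$ and consider the problem defining $P(x-Px)=\argmin\{\varphi((x-Px)-y):y\in\Conv\}$. Writing $(x-Px)-y=x-(Px+y)$ and using that $Px\in\Conv$ and that $\Conv$ is a convex cone, the point $Px+y$ lies in $\Conv$ for every $y\in\Conv$, while the choice $y=0$ returns $Px$. Hence
\[
\varphi\big((x-Px)-y\big)=\varphi\big(x-(Px+y)\big)\geq\min_{z\in\Conv}\varphi(x-z)=\varphi(x-Px)=\varphi\big((x-Px)-0\big)
\]
for all $y\in\Conv$, so $0$ is a minimizer of $y\mapsto\varphi((x-Px)-y)$ over $\Conv$. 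By uniqueness of the minimizer it is \emph{the} minimizer, that is $P(x-Px)=0$, which is precisely $P(I-P)=0$.

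Second, I would read off $(I-P)(\R^n)=\ker P$. The inclusion $\subseteq$ is immediate from the first part: for every $x$ the vector $(I-P)x=x-Px$ satisfies $P(x-Px)=0$, hence lies in $\ker P$. For the reverse inclusion $\supseteq$, if $x\in\ker P$ then $Px=0$, so $(I-P)x=x-0=x$, exhibiting $x$ as an element of the range of $I-P$.

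I expect the only genuinely non-routine point to be the first step, and there the crux is the algebraic rewriting of the feasible set as $x-(Px+\Conv)$ together with $Px+\Conv\subseteq\Conv$ --- this is exactly where the semigroup (additivity) structure of the convex cone $\Conv$ is used; mere convexity or star-shapedness of $\Conv$ would not suffice. Strong quasiconvexity enters only to upgrade ``$0$ attains the minimum'' to ``$0$ is the unique minimizer'', so that the $\argmin$ is genuinely the single point $0$ and $P$ is single-valued throughout.
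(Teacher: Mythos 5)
Your argument is correct and follows essentially the same route as the paper: rewriting $\varphi((x-Px)-y)=\varphi(x-(Px+y))$ with $Px+y\in\Conv$ by additivity of the cone, then invoking uniqueness of the minimizer from strong quasiconvexity to conclude $P(x-Px)=0$, with the second identity read off directly. Your version merely spells out the two inclusions for $(I-P)(\R^n)=\ker P$, which the paper leaves as an easy consequence.
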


\begin{proof}

	Since $\Conv$ is closed under the addition, $Px+v\in \Conv$ for
	any $v\in \Conv$ and since $$Px=\argmin\{\varphi (x-u):u\in \Conv\},$$
we have $$ \varphi(x-Px)\leq \varphi(x-Px-v)$$
and since the minimum point of $\varphi$ is unique, it  follows that
$$\argmin \{\varphi(x-Px-v): v\in \Conv\}=0.$$

Hence,
$$ P(x-Px) =\argmin\{\varphi((x-Px)-u): u\in \Conv\} = \argmin\{\varphi(x-Px-u):
u\in \Conv\}=0,$$
according to the above consideration. The second equality of \eqref{foo1} easily
follows from its first equality.

Since $x \in \R^n$ was arbitrarily chosen it follows that
$$P(I-P)=0.$$

\end{proof}

\begin{remark}\label{foorem}
We state this almost obvious consequence of (\ref{fookepl}) as a theorem,
since it makes connection with another important concept of functional 
analysis, vector lattice theory and optimization theory, called mutually
polar maps, and it is the starting point in our next investigations. 
\end{remark}

\begin{proposition}\label{homo1}

If $\Conv$ is a convex cone and $\varphi $ is a  strongly quasiconvex function
which is non-negatively 
homogeneous, that is, $\varphi(tx)=t\varphi(x)$ for arbitrary $x\in \R^n$ and arbitrary
$t\in \R_+,$ then P is non-negatively homogeneous too,
that is $P(tx) =t Px$ for any $x\in \R^n $ and any $t\in \R_+.$

\end{proposition}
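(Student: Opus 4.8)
The plan is to reduce the claim to a change of variables in the defining formula \eqref{fookepl}, exploiting that $\Conv$, being a convex cone, is invariant under multiplication by nonnegative scalars. First I would dispose of the case $t=0$. Strong quasiconvexity together with $\varphi(0)=0$ and $\varphi\ge 0$ forces $\varphi$ to be positive definite: if $\varphi(z)=0$ for some $z\ne 0$, then $\varphi(\lambda z)=\varphi(\lambda z+(1-\lambda)0)<\max\{\varphi(z),\varphi(0)\}=0$, contradicting $\varphi\ge 0$. Since $0\in\Conv$ and $\varphi(-y)>0$ for every $y\ne 0$, the function $y\mapsto\varphi(0-y)$ attains its minimum over $\Conv$ only at $y=0$, whence $P(0)=0=0\cdot Px$.

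Now fix $t>0$ and $x\in\R^n$. Writing $y=tz$ and using that $\Conv$ is a cone, the correspondence $y\leftrightarrow z$ is a bijection of $\Conv$ onto itself, and $tx-y=t(x-z)$; by non-negative homogeneity, $\varphi(tx-y)=\varphi(t(x-z))=t\varphi(x-z)$. Since $t>0$, multiplying the objective by $t$ does not change the minimizer, so
\[
\argmin\{\varphi(t(x-z)):z\in\Conv\}=\argmin\{\varphi(x-z):z\in\Conv\}=Px,
\]
the singleton being guaranteed by uniqueness of the minimizer of a strongly quasiconvex function over a convex set (Theorem~3.5.9 of \cite{Baz79}). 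Consequently the unique minimizer $y$ of $\varphi(tx-y)$ over $y\in\Conv$ equals $y=t\cdot Px$, that is, $P(tx)=tPx$.

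There is essentially no obstacle here: the only point deserving care is the simultaneous use of the three features --- the cone structure of $\Conv$ (which legitimizes the substitution $y=tz$), the positive homogeneity of $\varphi$ (which extracts the factor $t$), and the strict positivity of $t$ (which lets that factor be dropped from the $\argmin$) --- together with the uniqueness of the $\varphi$-projection, which is what allows passing from the set of minimizers to ``the minimizer''. No separation or topological arguments beyond those already embedded in Definition~\ref{foodef1} are required.
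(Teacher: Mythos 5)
Your proof is correct and follows essentially the same route as the paper's: a change of variables $u=tz$ in the $\argmin$, using the cone property of $\Conv$ and the homogeneity of $\varphi$ to pull out the factor $t$. Your separate treatment of $t=0$ (via positive definiteness of $\varphi$) is a small additional care the paper's computation silently omits, but it does not change the argument's substance.
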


\begin{proof}

Indeed,
\begin{eqnarray*}
	P(tx) = \argmin \{\varphi(tx-u): u\in \Conv\} =\argmin \{t\varphi(x-u/t):
	u/t\in \Conv\}=\\
	t\argmin \{\varphi(x-u/t): u/t\in \Conv \}=tPx.
\end{eqnarray*}

\end{proof}

\begin{remark}\label{remm2}
Obviously, the same proof holds if $\varphi$ is $\alpha$-homogeneous with $\alpha>0$ that is if
$$\varphi(tx) =t^\alpha \varphi(x),\quad\forall t\in \R_+.$$
\end{remark}

%
%
%

\section{Mutually polar retractions: definition \\  and examples}\label{mutu}

The continuous mapping $Q:\R^n \to \R^n$ is called \emph{retraction} if $Q0=0$ and it
is \emph{idempotent}, that is, if $Q^2=Q$.
Let $0$ be the zero mapping and $I$ the identity mapping of $\R^n$. They 
are retractions. 

\begin{definition}\label{muture} 
The mappings $P,R:\,\R^n\to \R^n$ are called
\emph{mutually polar retractions} if 
\begin{enumerate}[(i)]
	\item $P$ and $R$ are retractions, 
	\item $P+R=I$,
\end{enumerate}
\end{definition}

 We will use the notations $P\R^n=\mc P$ and $R\R^n=\mc R$.

 We will say that $P$ and $R$ 
 are \emph{polars of each other}.
Also, the sets $\mc P$ and $\mc R$ are said mutually polar.

The mappings $I$ and $0$ are mutually polar retractions.

\begin{lemma}\label{kov}

If $P$ and $R$ are mutually polar retractions, then
\begin{enumerate}[(i)]
	\item $PR=RP=0.$
	\item  $\mc P \cap \mc R = \{0\}$ and $\mc P + \mc R= \R^n$,
	\item $Px-Rx= 0$ if and only if $x=0$.
\end{enumerate}
    \end{lemma}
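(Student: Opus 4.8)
The plan is to derive the three statements in order, since each depends on the previous. For (i), I would start from the defining identity $P+R=I$, so $R=I-P$. Then $PR=P(I-P)=P-P^2=P-P=0$ because $P$ is idempotent. Symmetrically $RP=(I-P)P=P-P^2=0$. So (i) is essentially immediate from idempotency and $P+R=I$; no real obstacle here.

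For (ii), to see $\mc P\cap\mc R=\{0\}$, take $x\in\mc P\cap\mc R$. Then $x=Pu$ for some $u$ and $x=Rv$ for some $v$. Applying $P$ to $x=Rv$ gives $Px=PRv=0$ by (i); but $x=Pu$ gives $Px=P^2u=Pu=x$, so $x=0$. (Equivalently: $x\in\mc P$ means $Px=x$ since $P$ is idempotent and $x$ is in its range, and $x\in\mc R$ means $Rx=x$; then $x=Px$ and $0=Rx\cdot$... more cleanly $x=Px=Px$ and from $x=Rx$ we get $Px=PRx=0$.) For $\mc P+\mc R=\R^n$: given any $x\in\R^n$, write $x=Px+Rx$ using $P+R=I$; clearly $Px\in\mc P$ and $Rx\in\mc R$, so $x\in\mc P+\mc R$, and the reverse inclusion is trivial since $\mc P,\mc R\subseteq\R^n$.

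For (iii), the ``if'' direction is immediate: $P0=R0=0$ since $P,R$ are retractions, so $P0-R0=0$. For the ``only if'' direction, suppose $Px-Rx=0$, i.e.\ $Px=Rx$. Combining with $Px+Rx=x$ (from $P+R=I$) gives $2Px=x$, hence $Px=\tfrac12 x$. Now apply $P$: idempotency gives $P^2x=Px=\tfrac12 x$, but also $P^2x=P(\tfrac12 x)=\tfrac12 Px=\tfrac14 x$ — wait, this uses linearity of $P$, which is \emph{not} assumed here, so I should avoid it. Instead I would use that $Px\in\mc P$, so $Px$ is a fixed point of $P$: $P(Px)=Px$. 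From $Px=\tfrac12 x$ we also have $x=2Px\in\mc P$ (since $\mc P$ is a cone — $P$ is non-negatively homogeneous in the relevant setting, or more simply $2Px$ lies in the range of... ). The cleanest route avoiding any homogeneity assumption: from $Px=Rx$ and $Px+Rx=x$, conclude $Rx=Px$ and $x=Px+Rx=2Px$; then $Rx=R(2Px)$... this still needs homogeneity of $R$.

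The main obstacle is precisely this: statement (iii) seems to require either linearity or positive homogeneity of the retractions, which Definition \ref{muture} does not explicitly grant. I expect the intended argument is: $Px=Rx$ together with $Px+Rx=x$ yields $x=2Px$, so $x\in\mc P$ (as $\mc P$ is closed under the scaling inherent in being a cone — this holds in all the concrete cases of interest, e.g.\ $\varphi$-projections, which are non-negatively homogeneous by Proposition \ref{homo1}); then $Rx=Rx$ and $x\in\mc P$ forces $Rx=PRx'$-type reasoning... Honestly, the slick finish is: $x=2Px$ implies $Rx = x-Px = 2Px-Px = Px$, consistent but not yet a contradiction; apply $P$ to $x=Px+Rx=2Px$ to get $Px=P(2Px)$, and if $P$ is positively homogeneous this is $2Px$, so $Px=2Px$, hence $Px=0$, hence $x=2Px=0$. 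So I would either invoke the standing homogeneity hypothesis (as the paper's later sections do) or flag that (iii) uses non-negative homogeneity of $P$; with that in hand the argument closes cleanly.
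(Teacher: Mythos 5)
There are two genuine problems, both stemming from the fact that in Definition \ref{muture} the retractions are only continuous idempotent maps with $P0=R0=0$ and $P+R=I$ (pointwise); they are \emph{not} linear. In (i), your step $PR=P(I-P)=P-P^2$ silently uses additivity of $P$: $P\bigl(x-Px\bigr)$ need not equal $Px-P^2x$ for a nonlinear $P$. Your ``symmetric'' computation $RP=(I-P)P$ is in fact the sound one, since $(I-P)(Px)=Px-P(Px)=0$ only uses the pointwise meaning of $I-P$ and idempotency of $P$. The correct way to get $PR=0$ is the mirror image of that valid pattern (this is what the paper does): evaluate $P+R=I$ at the point $Rx$ to get $P(Rx)=Rx-R(Rx)=Rx-Rx=0$, using idempotency of the \emph{inner} map $R$, not of $P$. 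So (i) is easily repaired, but as written it is not justified; note also that your proof of (ii), which is otherwise correct and identical to the paper's, leans on the $PR=0$ half you did not validly establish.

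In (iii) the gap is more substantive: you conclude that the ``only if'' direction needs linearity or non-negative homogeneity of $P$ and propose to import that as an extra hypothesis, but no such hypothesis is needed, and adding it proves a weaker statement than the lemma. The argument closes using only (ii), which you already have: if $Px-Rx=0$, then $Px=Rx$, and since $Px\in\mc P$ and $Rx\in\mc R$, item (ii) gives $Px=Rx\in\mc P\cap\mc R=\{0\}$, so $Px=Rx=0$; then $P+R=I$ yields $x=Px+Rx=0$. Your route via $x=2Px$ and $Px=P(2Px)=2Px$ is valid only under positive homogeneity, which Definition \ref{muture} does not grant, so as a proof of the stated lemma it is incomplete even though the auxiliary observation is fine in the homogeneous settings appearing later in the paper.
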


\begin{proof}
	$\,$

	\begin{enumerate}[(i)]
		\item From Definition \ref{muture}, it follows that
			\(PR=(P+R)R-R^2=R-R^2=0.\) Hence, by swapping $P$ and
			$R$, we also have $RP=0$.
		\item Let any $z\in\mc P\cap\mc R$. Hence, there exists
			$x,y\in\R^n$ such that $z=Px=Ry$, which, by using the 
			previous item, implies $z=Px=P^2x=PRy=0$. The second
			equality is trivial.
		\item The equality $P0-R0=0$ is trivial. Suppose that $Px-Rx=0$. 
			Hence, by using the first equality of item (ii), we
			obtain $Px=Rx\in\mc P\cap\mc R=\{0\}$. Hence, the second 
			equality of item (ii) yields $x=Px+Rx=0$. 
	\end{enumerate}
\end{proof}


\begin{example}\label{ex0}
If $\R^n_+$ is the positive orthant in  $\R^n$, then putting 
$x=t_1e^1+\dots+t_ne^n$ consider the mappings
$$Qx=|t_1|e^1+\dots+|t_n|e^n$$
and
$$Px=t_1^+e^1+\dots+t_n^+e^n, $$
where $t^+$ denotes the positive part of the real number $t$.

$Q$ and $P$ are both retractions on the positive orthant $\R^n_+$. 
$I-P$ is retraction too, but $I-Q$ is not.
\end{example}

\begin{example}\label{eex1}

The \emph{relation $\leq$ induced by the pointed convex cone $\mc K$} is given by $x\leq y$ if and only if 
$y-x\in \mc K$. Particularly, we have $\mc K=\{x\in \R^n:0\leq x\}$. The relation $\leq$ is an \emph{order relation}, that 
is, it is reflexive, transitive and antisymmetric; it is \emph{translation invariant}, that is, $x+z\leq y+z$, 
$\forall x,y,z\in \R^n$ with $x\leq y$; and it is \emph{scale invariant}, that is, $\lambda x\leq\lambda y$, 
$\forall x,y\in \R^n$ with $x\leq y$ and $\lambda \in \R_+$.

Conversely, for every $\leq$ scale invariant, translation invariant and antisymmetric order relation in $\R^n$ there is a pointed 
convex cone $\mc K$, defined by $\mc K=\{x\in \R^n:0\leq x\}$, such that $x\leq y$ if and only if $y-x\in \mc K$. The cone $\mc K$ 
is called the \emph{positive cone of $\R^n$} and $(\R^n,\leq)$ (or $(\R^n,\mc K)$) is called an \emph{ordered vector space}; we also use
the notation $\leq =\leq_\mc K.$ 

The ordered vector space $(\R^n,\leq)$ is called \emph{lattice ordered} if for every  $x,y\in \R^n$ there exists 
$x\vee y:=\sup\{x,y\}$. In this case the positive cone $\mc K$ is called a \emph{lattice cone}.
A lattice ordered vector space is called a \emph{Riesz space}. 
Denote $x^+=0\vee x$ and $x^-=0\vee (-x)$. Then, $x=x^+-x^-$, $x^+$ is 
called the \emph{positive part} of $x$ and $x^-$ is called the \emph{negative part} of $x$. 
The \emph{absolute value} of $x$ is defined by $|x|=x^++x^-$. The mapping 
$x\mapsto x^+$ is called the \emph{positive part mapping}.

The continuity of the positive
part mapping in the vector lattice is equivalent to the closedness of $\mc K$.

If $(\R^n,\mc K)$ is a Riesz space with positive cone $\mc K$ then $P$ and $R$ defined by
$Px=x^+$ and $Rx=-x^-$ are mutually polar retractions. Here $\mc P=\mc K$ andf $\mc R = -\mc K$.

The order theoretic conditions on $\mc K$ impose strong geometric restrictions on it.
This follows from the following result:

\begin{theorem}[Youdine]\label{Yu}
The pair $(\R^m,\mc K)$ is a vector lattice with continuous lattice operations 
if and only if $\mc K$ is a simplicial cone. 
\end{theorem}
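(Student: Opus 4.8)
The plan is to prove the two implications of the equivalence separately. The direction ``simplicial cone $\Rightarrow$ vector lattice with continuous operations'' is routine; the converse carries the weight.

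\emph{Simplicial $\Rightarrow$ lattice.} Suppose $\mc K=\cone\{x^1,\dots,x^m\}$ with $x^1,\dots,x^m$ linearly independent, so they form a basis of $\R^m$ and every $x$ has a unique expansion $x=\sum_i\alpha_i(x)x^i$. The coordinate functionals $\alpha_i$ are linear, hence continuous, and $x\le_{\mc K}y$ holds exactly when $\alpha_i(x)\le\alpha_i(y)$ for all $i$; in particular $\mc K$ is pointed, so $\le_{\mc K}$ is antisymmetric. First I would check that $x\vee y:=\sum_i\max\{\alpha_i(x),\alpha_i(y)\}\,x^i$ is the least upper bound of $\{x,y\}$: it dominates both $x$ and $y$ because the coefficients of $(x\vee y)-x$ and of $(x\vee y)-y$ are nonnegative, while any common upper bound $w$ satisfies $\alpha_i(w)\ge\max\{\alpha_i(x),\alpha_i(y)\}$ for every $i$, hence $w\ge_{\mc K}x\vee y$. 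Since $x\vee y$ is assembled from the continuous maps $\alpha_i$ and $\max$, the lattice operations are continuous, so $(\R^m,\mc K)$ is a vector lattice with continuous operations.

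\emph{Lattice $\Rightarrow$ simplicial.} First I would read off the geometry of $\mc K$ from the hypotheses. Antisymmetry gives $\mc K\cap(-\mc K)=\{0\}$, so $\mc K$ is pointed; the identity $x=x^+-x^-$ with $x^\pm\in\mc K$ gives $\mc K-\mc K=\R^m$, so $\mc K$ is generating and therefore full-dimensional; and by the stated equivalence between continuity of the lattice operations (equivalently, of the positive-part mapping) and closedness of $\mc K$, the cone $\mc K$ is closed. A pointed, closed, full-dimensional convex cone possesses a compact base $B=\{x\in\mc K:\lng f,x\rng=1\}$, where $f$ is taken in the interior of the dual cone (so $\lng f,\cdot\rng$ is strictly positive on $\mc K\setminus\{0\}$); by the finite-dimensional Krein--Milman (Minkowski) theorem $B$ equals the convex hull of its extreme points, and passing to cones yields $\mc K=\cone(E)$, where $E$ contains exactly one generator on each extreme ray of $\mc K$.

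It remains to bound $|E|$, which is the core. If $e^1,e^2\in E$ lie on distinct extreme rays and $z:=e^1\wedge e^2\ne0$, then $0\le_{\mc K}z\le_{\mc K}e^1$; since the ray $\R_+e^1$ is a face of $\mc K$, the decomposition $e^1=z+(e^1-z)$ into elements of $\mc K$ forces $z\in\R_+e^1$, and symmetrically $z\in\R_+e^2$, so the two rays coincide, a contradiction. Hence distinct members of $E$ are lattice-disjoint. Next, any pairwise disjoint family $u^1,\dots,u^k$ of nonzero elements of $\mc K$ is linearly independent: from $\sum_i\lambda_iu^i=0$ set $v:=\sum_{\lambda_i>0}\lambda_iu^i=\sum_{\lambda_j<0}(-\lambda_j)u^j\ge_{\mc K}0$; using the standard Riesz-space facts that positive scaling commutes with $\wedge$ and that an element disjoint from each of finitely many positive vectors is disjoint from their sum, one gets $v\wedge u^{j_0}=0$ whenever $\lambda_{j_0}<0$, whereas $v\ge_{\mc K}(-\lambda_{j_0})u^{j_0}$ together with monotonicity of $\wedge$ gives $v\wedge u^{j_0}\ne0$ unless no index is negative; by symmetry no index is positive either, so all $\lambda_i=0$. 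Therefore $|E|\le m$; since $\cone(E)=\mc K$ is generating, $E$ spans $\R^m$, so $|E|=m$ and $E$ is a basis, i.e.\ $\mc K$ is simplicial.

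The main obstacle, as I see it, is the third paragraph: making the passage to extreme rays fully rigorous (that a pointed, closed, full-dimensional cone is the conical hull of its extreme rays), and, above all, carrying out the Riesz-space inequalities cleanly — the monotonicity $a\ge_{\mc K}b\ge_{\mc K}0\Rightarrow a\wedge c\ge_{\mc K}b\wedge c$, positive homogeneity of $\wedge$, and additivity of disjointness — on which the whole counting argument rests.
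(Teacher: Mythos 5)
Your argument is sound, but note that the paper itself gives no proof of this statement: it is quoted as a classical theorem of Youdine (with the 1939 reference) precisely to record that lattice-theoretic hypotheses force a rigid geometry on $\mc K$, so there is no internal proof to compare against. Your route is the standard one for this result and it works: the easy direction via coordinate functionals in the basis of generators is correct; in the converse you correctly extract pointedness from antisymmetry, the generating property from $x=x^+-x^-$, and closedness from the continuity hypothesis (legitimately, since the paper itself states in Example \ref{eex1} that continuity of the positive-part map is equivalent to closedness of $\mc K$); then a strictly positive functional $f\in\intr\mc K^*$ (which exists exactly because $\mc K$ is pointed and closed) gives a compact base, Minkowski's theorem gives $\mc K=\cone(E)$ with $E$ meeting each extreme ray once, the face property of extreme rays forces $e^1\wedge e^2=0$ for generators on distinct rays, and the classical Riesz-space fact that pairwise disjoint nonzero positive elements are linearly independent bounds $\card(E)\le m$, while the generating property forces $\card(E)=m$. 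The steps you flag as remaining obstacles (monotonicity and positive homogeneity of $\wedge$, additivity of disjointness, $\mc K$ being the conical hull of its extreme rays for a pointed closed cone with compact base) are indeed standard and none of them fails, so the proposal is a correct, essentially self-contained proof of a result the paper only cites.
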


\end{example}

\begin{example}\label{eex2}

If $\mc C\subseteq \R^n$ is a nonempty closed convex set, then it is well defined the 
\emph{metric projection} onto  it, i.e., the mapping $P_\mc C: \R^n \to \mc C$ defined by
$$ P_\mc Cx=\argmin \{y-x:\,y\in \mc C\}.$$

If $\mc K\subseteq \R^n$ is a closed convex cone and $\mc K^\circ$ is the polar of  $\mc K$,
then  the metric projections $P_\mc K$ and $P_{\mc K^\circ}$ are mutually polar retractions.
This is the consequence of the following theorem of Moreau:

\begin{theorem} [Moreau]
	Let  $\mc K,\,\,\mc L\subset \R^n$ be  two mutually polar closed 
	convex cones. Then, the following statements are equivalent:
	\begin{enumerate}
		\item[(i)] $z=x+y,~x\in \mc K,\,\,~y\in \mc L$ and $\lng x,y\rng=0$,
		\item[(ii)] $x=P_\mc K(z)$ and $y=P_\mc L(z)$.
	\end{enumerate}
\end{theorem}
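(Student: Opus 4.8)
The plan is to establish the equivalence between the orthogonal-decomposition characterization and the metric-projection characterization of the two cones, exploiting the Farkas lemma (stated above as $\Kup^{\circ\circ}=\Kup$) together with the elementary variational inequality characterizing Euclidean projection onto a closed convex set.

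\medskip

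\noindent\textbf{Proof of Moreau's theorem (plan).}

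\medskip

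\emph{(i) $\Rightarrow$ (ii).} Suppose $z=x+y$ with $x\in\mc K$, $y\in\mc L=\mc K^\circ$ and $\lng x,y\rng=0$. First I would verify that $x=P_{\mc K}(z)$ by checking the standard variational inequality: for a nonempty closed convex set $\mc C$, a point $p\in\mc C$ equals $P_{\mc C}(z)$ if and only if $\lng z-p,\,c-p\rng\le 0$ for all $c\in\mc C$. So I must show $\lng z-x,\,c-x\rng\le 0$ for every $c\in\mc K$. Since $z-x=y$, this reads $\lng y,c-x\rng=\lng y,c\rng-\lng y,x\rng=\lng y,c\rng\le 0$, which holds because $y\in\mc K^\circ$ and $c\in\mc K$. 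Hence $x=P_{\mc K}(z)$. By symmetry — using that $\mc K$ and $\mc L$ are polar to each other, which is exactly the content of the Farkas lemma $\mc K^{\circ\circ}=\mc K$ — the same argument with the roles of $\mc K$ and $\mc L$ interchanged gives $y=P_{\mc L}(z)$.

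\medskip

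\emph{(ii) $\Rightarrow$ (i).} Conversely, suppose $x=P_{\mc K}(z)$ and $y=P_{\mc L}(z)$. The main work is to prove the decomposition $z=x+y$ together with $x\in\mc K$, $y\in\mc L$, $\lng x,y\rng=0$; membership $x\in\mc K$ and $y\in\mc L$ is immediate from the definition of projection. The cleanest route is: let $x=P_{\mc K}(z)$, and I claim $w:=z-x\in\mc K^\circ=\mc L$ and $\lng x,w\rng=0$. For the first claim, apply the variational inequality for $P_{\mc K}$: $\lng z-x,\,c-x\rng\le 0$ for all $c\in\mc K$. Taking $c=0\in\mc K$ gives $\lng z-x,x\rng\ge 0$; taking $c=2x\in\mc K$ gives $\lng z-x,x\rng\le 0$; hence $\lng z-x,x\rng=0$, i.e. $\lng w,x\rng=0$. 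Then for arbitrary $c\in\mc K$, $\lng w,c\rng=\lng w,c\rng-\lng w,x\rng=\lng z-x,c-x\rng\le 0$, so $w\in\mc K^\circ=\mc L$. Thus $z=x+w$ is a decomposition of the type in (i) with the second summand in $\mc L$. By the already-proved implication (i) $\Rightarrow$ (ii), this forces $w=P_{\mc L}(z)$; but $P_{\mc L}(z)=y$ by hypothesis, so $w=y$ and $z=x+y$ with $\lng x,y\rng=0$, as required.

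\medskip

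\noindent The step I expect to be the only real obstacle is pinning down the variational-inequality characterization of Euclidean projection onto a closed convex set and the polarity relation $\mc K^{\circ\circ}=\mc K$ — but the latter is the Farkas lemma quoted above, and the former is classical (and can be cited from \cite{Rockafellar1970}); everything else is the bookkeeping above. Once (i) $\Rightarrow$ (ii) is in place, the converse is essentially forced by uniqueness of the metric projection.
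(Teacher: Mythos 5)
Your argument is correct. Note, however, that the paper does not prove this statement at all: it is quoted as a classical result of Moreau, with the proof delegated to the citations \cite{Moreau1962} and \cite{Zarantonello1971}, so there is no in-paper proof to compare yours against. What you have written is the standard self-contained argument: for (i)$\Rightarrow$(ii) you verify the variational inequality $\lng z-p,\,c-p\rng\le 0$ directly, using $y\in\mc K^\circ$ and, for the symmetric half, the bipolarity $\mc L^\circ=\mc K^{\circ\circ}=\mc K$ supplied by the Farkas lemma quoted in the paper; for (ii)$\Rightarrow$(i) you extract $\lng z-x,x\rng=0$ by testing the variational inequality at $c=0$ and $c=2x$ (both legitimate since $\mc K$ is a cone), deduce $z-x\in\mc K^\circ=\mc L$, and then invoke uniqueness of the metric projection via the already-proved implication to identify $z-x$ with $y$. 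The only ingredient you take on faith is the variational-inequality characterization of the Euclidean projection onto a closed convex set, which is classical and appropriately attributed to \cite{Rockafellar1970}; with that in hand, every step you outline goes through, so your proposal is a complete and correct proof of the statement the paper merely cites.
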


\cite{Moreau1962}, \cite{Zarantonello1971}.
\end{example}

Using a geometric method, a large class of mutually polar retractions is defined in \cite{Nemeth2020}.

\subsection{Mutually polar retractions for weighted power-cones}

The following lemma is Example 2.25 in \cite{BV04}, where it is both stated and proved. 

\begin{lemma}\label{dualn}

Let $\|\cdot\| $ be  a norm on $\R^n$. Its \emph{dual norm}  $\|\cdot\|_*$ is defined by
$$ \|y\|_* = \sup \{|\lng y, x\rng | :\,\|x\|\leq 1\}.$$

Consider the following pointed closed convex cones in $\R^{n+1}$:
$$\Kup = \lf\{(x_0,x_1,...,x_n)\tp=\lf(x_0,x\tp\rg)\tp:\,\,x_0\geq \|x\|,\,\,
x=(x_1,...,x_n)\tp \in \R^n\rg\}$$
and
$$\mc L=\lf\{(y_0,y_1,...,y_n)
=\lf(y_0,y\tp\rg)\tp:\,\,y_0\geq \|y\|_*,\,\,y=(y_1,...,y_n)\tp \in \R^n\rg\}.$$

Then, $\mc L = \Kup^*$ and hence $-\mc L$ and $\Kup $ 
are mutually polar pointed closed convex cones, with the metric projections
$P_\Kup$ and $P_{-\mc L}$ mutually polar retractions. 

\end{lemma}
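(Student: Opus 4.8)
The plan is to establish $\mc L = \Kup^*$ by a direct duality computation, after which the remaining assertions follow immediately from the Farkas lemma and Moreau's theorem. First I would unwind the definition: $\Kup^* = -\Kup^\circ = \{(y_0,y\tp)\tp : \lng (y_0,y\tp)\tp,(x_0,x\tp)\tp\rng \ge 0 \text{ for all } (x_0,x\tp)\tp \in \Kup\}$, i.e. $y_0 x_0 + \lng y,x\rng \ge 0$ whenever $x_0 \ge \|x\|$. So I must show this family of inequalities is equivalent to $y_0 \ge \|y\|_*$.

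For the inclusion $\Kup^* \subseteq \mc L$: given $(y_0,y\tp)\tp \in \Kup^*$, first test against $x_0 = \|x\|$ for arbitrary $x$, giving $y_0\|x\| + \lng y,x\rng \ge 0$, hence $y_0\|x\| \ge -\lng y,x\rng$; replacing $x$ by $-x$ gives $y_0\|x\| \ge \lng y,x\rng$, so $y_0 \|x\| \ge |\lng y,x\rng|$ for all $x$, and taking the supremum over $\|x\|\le 1$ yields $y_0 \ge \|y\|_*\ge 0$. (Testing against $x=0$, $x_0=1$ already gives $y_0\ge 0$, so the supremum step is legitimate.) Conversely, for $\mc L \subseteq \Kup^*$: if $y_0 \ge \|y\|_*$ and $x_0 \ge \|x\|$, then $y_0 x_0 \ge \|y\|_* \|x\| \ge |\lng y,x\rng| \ge -\lng y,x\rng$, where the middle inequality is the defining property of the dual norm applied to $x/\|x\|$ (the case $x=0$ being trivial); hence $y_0 x_0 + \lng y,x\rng \ge 0$, so $(y_0,y\tp)\tp \in \Kup^*$. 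This proves $\mc L = \Kup^*$.

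It then remains to package the consequences. Both $\Kup$ and $\mc L$ are closed (they are sublevel-type sets of the continuous functions $(x_0,x\tp)\tp\mapsto \|x\|-x_0$ and its dual analogue), convex (epigraph-type sets of norms, which are convex), and pointed (if both $(x_0,x\tp)\tp$ and $-(x_0,x\tp)\tp$ lie in $\Kup$ then $x_0 \ge \|x\|$ and $-x_0 \ge \|x\| = \|-x\|$, forcing $x_0 = 0$ and $x = 0$) — these facts are already asserted in the statement of the lemma, so I need not belabour them. From $\mc L = \Kup^*$ we get $-\mc L = \Kup^\circ$, and since $\Kup$ is a closed convex cone the Farkas lemma gives $\Kup^{\circ\circ} = \Kup$, i.e. $(-\mc L)^\circ = \Kup$; thus $\Kup$ and $-\mc L$ are polar to each other. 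Finally, Moreau's theorem (Example~\ref{eex2}) applied to the mutually polar closed convex cones $\Kup$ and $-\mc L$ shows that the metric projections $P_\Kup$ and $P_{-\mc L}$ satisfy $P_\Kup + P_{-\mc L} = I$ with $P_\Kup P_{-\mc L} = P_{-\mc L}P_\Kup = 0$, so they are mutually polar retractions in the sense of Definition~\ref{muture}.

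The only genuinely non-routine point is the dual-norm inequality $|\lng y,x\rng| \le \|y\|_*\|x\|$ and its sharpness, i.e. that the supremum defining $\|y\|_*$ is actually the best constant in both directions; but this is precisely the content of Lemma~\ref{dualn}'s definition of $\|\cdot\|_*$ together with the homogeneity of both norms, so there is no real obstacle — the argument is essentially a symmetrization trick ($x \mapsto -x$) plus homogeneous rescaling. I would present the computation of $\Kup^*$ as the single displayed chain of equivalences and let the rest be a one-sentence appeal to Farkas and Moreau.
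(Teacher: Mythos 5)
Your proof is correct: the duality computation giving $\mc L=\Kup^*$ is the standard argument, and the packaging via the Farkas lemma and Moreau's theorem (the paper's Example~\ref{eex2}) is exactly how the result is meant to be used. Note that the paper itself offers no proof of this lemma --- it cites Example~2.25 of Boyd--Vandenberghe, which establishes $\mc L=\Kup^*$ by essentially the same computation --- so your writeup simply makes explicit what the paper delegates to that reference, including the easy verification that $\Kup$ and $\mc L$ are pointed closed convex cones, which the paper states separately without proof.
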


It is easy to verify that $\mc K,\mc L$ in Lemma \ref{dualn} are pointed closed convex cones.
%
%
%
%
%
%
%
%

\begin{theorem}\label{ex4}
	Let $n$ be an arbitrary positive integer, $k\ge 1$
	and $p>2-1/k$ be a 
	rational number with even numerator and odd denominator, $\xi\in\R^m_{++}$,
	$\omega\in\R^m_{++}$, $\mc L(\xi,k)$ be the pointed closed convex
	weighted power-cone defined by
	\[\mc L(\xi,k):=\lf\{(x_1,\dots,x_m,x_{m+1})\tp\in\R^{m+1}:
		x_{m+1}\ge\lf(\xi_1|x_1|^k+\dots+\xi_m|x_m|^k\rg)^{\f1k}\rg\},
	\]
	$\|\cdot\|_{p,\omega}$ be the norm in $\R^n$ defined by 
	\[
		\|x\|_{p,\omega}=\lf(\omega_1|x_1|^p+\dots+\omega_s|x_n|^p\rg)^{\f1p}
	\]
	and $P^{\|\cdot\|_{p,\omega}}_{\mc L(\xi,k)}$ be the projection onto $\mc L(\xi,k)$
	with respect to the norm $\|\cdot\|_{p,\omega}$ in $\R^{m+1}$. Denote 
	$P=P^{\|\cdot\|_{p,\omega}}_{\mc L(\xi,k)}$ and $R=I-P$. Then, $P,R$ is a pair of 
	mutually polar retractions.  
\end{theorem}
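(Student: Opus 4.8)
The plan is to obtain the statement as a direct application of Theorem~\ref{footet}, after which only the axioms of Definition~\ref{muture} remain to be verified. I would take $\varphi:=\|\cdot\|_{p,\omega}$ as the underlying function (on $\R^{m+1}$) and $\Conv:=\mc L(\xi,k)\subseteq\R^{m+1}$ as the cone, so that $P=P^{\varphi}_{\Conv}$ in the sense of Definition~\ref{foodef1}; the task then splits into (a) checking that $\varphi$ and $\mc L(\xi,k)$ satisfy the hypotheses of that definition and of Theorem~\ref{footet}, and (b) deducing that $P$ and $R=I-P$ are retractions with $P+R=I$.

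For (a): the set $\mc L(\xi,k)$ is the epigraph of the map $x\mapsto\lf(\xi_1|x_1|^k+\dots+\xi_m|x_m|^k\rg)^{1/k}$, which is a (weighted $\ell^k$) norm on $\R^m$ because $k\ge1$ and $\xi\in\R^m_{++}$; hence $\mc L(\xi,k)$ is a closed convex cone, as the statement already records. The function $\varphi=\|\cdot\|_{p,\omega}$ is a norm for $p\ge1$ by Minkowski's inequality, so it is continuous, nonnegative, and $\varphi(0)=0$; the one nontrivial point is strong quasiconvexity. Using the equivalence noted just after the definition of strong quasiconvexity — that $\varphi$ is strongly quasiconvex iff every nonempty sublevel set is strictly convex — and observing that each such sublevel set of $\|\cdot\|_{p,\omega}$ is either a singleton or a positive multiple of $\mathbb B=\{x:\|x\|_{p,\omega}\le1\}$, it is enough that $\mathbb B$ be strictly convex. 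This holds because $p>1$ (which follows from $p>2-1/k$ and $k\ge1$): the function $x\mapsto\sum_i\omega_i|x_i|^p$ is strictly convex, since $t\mapsto|t|^p$ is strictly convex on $\R$ for $p>1$ and a sum of strictly convex functions is strictly convex, and a strictly convex function has strictly convex sublevel sets.

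For (b): Theorem~\ref{footet} now yields $P(I-P)=0$, and $P+R=I$ is immediate. Because $0\in\mc L(\xi,k)$ and, for any $z\in\mc L(\xi,k)$, the value $\varphi(z-u)=\varphi(0)=0$ is attained (uniquely, as $\varphi$ is positive definite) only at $u=z$, we get $P0=0$, hence $R0=0$, and moreover $Pz=z$ for every $z\in\mc L(\xi,k)$; consequently $P^2x=P(Px)=Px$ since $Px\in\mc L(\xi,k)$, so $P$ is idempotent, and $R^2x=(I-P)(x-Px)=(x-Px)-P(x-Px)=x-Px=Rx$ by $P(I-P)=0$, so $R$ is idempotent. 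The only step needing a genuine (though routine) argument, and the one I would treat most carefully, is the continuity of $P$ — required for $P,R$ to be retractions. I would argue it by a subsequence/uniqueness device: if $x_j\to x$ and $y_j:=Px_j$, then $\varphi(x_j-y_j)\le\varphi(x_j-0)=\varphi(x_j)$ is bounded, so by coercivity of the norm $\{y_j\}$ is bounded; any limit point $y^*$ of $\{y_j\}$ lies in the closed set $\mc L(\xi,k)$ and, on letting $j\to\infty$ in $\varphi(x_j-y_j)\le\varphi(x_j-z)$ for each fixed $z\in\mc L(\xi,k)$, minimizes $z\mapsto\varphi(x-z)$ over $\mc L(\xi,k)$; by uniqueness of the minimizer $y^*=Px$, so $y_j\to Px$. (Alternatively, continuity of $\varphi$-projections can be quoted from \cite{FN,FN1}.) With continuity in hand, all the requirements of Definition~\ref{muture} are met and $P,R$ form a pair of mutually polar retractions.
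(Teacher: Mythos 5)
Your verification of the literal axioms of Definition \ref{muture} --- idempotence of $P$ and $R$ via $P(I-P)=0$, the identity $P+R=I$, $P0=R0=0$, strong quasiconvexity of $\|\cdot\|_{p,\omega}$, and continuity of $P$ --- is correct, and it matches the opening lines of the paper's proof (which likewise gets $R^2=(P+R)R=R$ from $PR=0$ of Theorem \ref{footet}). But that is not where the content of the theorem lies: the paper's proof devotes essentially all of its effort to a point you never address, namely that $\mc R=R\R^n=\ker(P)$ is itself a pointed closed convex cone, i.e.\ that $P,R$ are mutually polar retractions \emph{on convex cones} (cf.\ Remark \ref{kulomb}, which lists Theorem \ref{ex4} among the examples attached to special cones, and Remark \ref{kone}, where the convexity of $R\R^n$ is identified as the central question of the paper). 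Under your reading the theorem would be an immediate and routine consequence of Theorem \ref{footet} for \emph{any} smooth strictly convex norm and \emph{any} closed convex cone; tellingly, your argument never uses the hypothesis $p>2-1/k$ nor the requirement that $p$ be rational with even numerator and odd denominator --- an unused hypothesis of that specificity is a reliable sign that the statement you proved is weaker than the one intended.

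The missing idea is the explicit computation of $\ker(P)$. The paper invokes Theorem 5 of \cite{FN}, which gives $\ker(P)=\lf\{x\in\R^{m+1}\setminus\{0\}:\nabla\varphi(x)\in\mc P\cc\rg\}\cup\{0\}$ for $\varphi=\|\cdot\|_{p,\omega}$ and $\mc P=\mc L(\xi,k)$. Then, by the duality of the weighted $k$- and $\ell$-norms ($1/k+1/\ell=1$, weights $\zeta_i=\xi_i^{-1}$) together with Lemma \ref{dualn}, one has $\mc P\cc=-\mc L(\zeta,\ell)$. Finally, writing out $\nabla\varphi(x)$ (whose components involve $x_i^{p-1}$; the parity assumptions on $p$ are what make these powers well defined with the right signs) shows that $\nabla\varphi(x)\in\mc P\cc$ exactly when $x\in-\mc L(\beta,r)$ with $\beta_i=\zeta_i\omega_i$ and $r=(p-1)\ell$, and the hypothesis $p>2-1/k$ is precisely what guarantees $r>1$, so that $\ker(P)=-\mc L(\beta,r)$ is a pointed closed convex cone, again by Lemma \ref{dualn}. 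None of this appears in your proposal; what you wrote is internally sound, but it establishes only the easy part of the statement and omits the step for which the theorem's hypotheses were designed.
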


\begin{proof}
	Denote $\mc P=\mc L(\xi,k)$ and $\varphi=\|\cdot\|_{p,\omega}$. 
	We have $P^2=P$ and $\mc P$ is a pointed
	closed convex cone. Since $P+R=I$ and $PR=0$ (which, by Theorem
	\ref{footet}, holds for any generalized projection), it follows that
	$R^2=(P+R)R=R$. Since the second inequality in \eqref{foo1} implies that 
	$\mc R=\ker(P)$, it is enough to show that $\ker(P)$ is a
	pointed convex cone. 
	Theorem 5 of \cite{FN} yields 
	\[\ker(P)=\lf\{x\in\R^{m+1}\setminus\{0\}:\nabla\varphi(x)\in\mc
	P\cc\rg\}\cup\{0\}.\] 
	For all $i\in\{1,\dots,m\}$ denote $\zeta_i=\xi_i^{-1}$ and
	$\zeta=(\zeta_1,\dots,\zeta_m)\tp$.
	
	It is known that the $\|\cdot\|_{k,\xi}$ and $\|\cdot\|_{\ell,\zeta}$ norms are dual 
	(see \cite{SchaeferWolff1999}), hence via Lemma \ref{dualn} we have
	\begin{equation}\label{cc}
		\mc P\cc=-\mc L(\zeta,\ell),
	\end{equation}
	where $1/k+1/\ell=1$. 
	
	On the other hand, for any non-zero $x\in\R^{m+1}$ we have
	$\nabla\varphi(x)\in\mc P\cc$ if and only if 
	\begin{equation}\label{(p-1)l}
		(-x_{m+1})^{p-1}=-x_{m+1}^{p-1}\ge\lf(\zeta_1\omega_1\lf|x_1^{p-1}\rg|^\ell
			+\dots+\zeta_m\omega_m\lf|x_m^{p-1}\rg|^\ell\rg)^{\f1\ell}. 
		\end{equation} 
		Let $r:=(p-1)\ell$ and
		$\beta=\lf(\zeta_1\omega_1,\dots,\zeta_m\omega_m\rg)\tp$. Then, 
		$r>(1-1/k)\ell=1$ and formula \eqref{(p-1)l}
		implies that $\nabla\varphi(x)\in\mc P\cc$ if and only if 
		$x\in-\mc L(\beta,r)$. Hence, $\mc R=\ker(P)=-\mc L(\beta,r)$, which is a
		pointed closed convex cone. Indeed, Lemma 
		\ref{dualn} implies that $\mc L(\gamma,t)$ is
		a pointed closed convex cone for all $\gamma\in\R^m_{++}$ and
		$t\ge 1$.
\end{proof}

\begin{remark}\label{kulomb}

We observe an essential difference among the above examples.
In the Examples \ref{ex0}, \ref{eex1} and Theorem \ref{ex4} the retractions refer to special
cones, in the case of the metric projection given in Example \ref{eex2}
the retraction comprise the whole class of closed convex cones.

\end{remark}


\subsection{Generalized projections engendering mutually polar retractions}

From Theorem \ref{footet} it follows that for a general projection $P$
onto  a closed convex set $\Conv$ containing $0$ and additively closed, given by
a strongly quasiconvex function $\varphi$ by the formula (\ref{fookepl}), $P$ and $R=I-P$
are mutually polar retractions onto $\Conv$ and $R{\R^n}$. 
We start with a a projection $P$ . What can we say about $R$?

From the simple geometric shape of $\Conv$ similar conditions on
$R\R^n$ may be expected. But it is not the case, even if $\Conv$ is a closed convex cone and $\varphi$
is positively homogeneous, strongly quasiconvex, symmetric, smooth function.

While the structural
conditions on $\Conv$ are very simple, the answers on the simple questions
on $R$ and $R\R^n$ seem rather difficult.

\begin{remark}\label{kone}

	If $\Conv$ is a convex cone and $\varphi$ is positively homogeneous, it 
	follows
from the continuity of $\varphi$ that $P$ is continuous (Proposition 4 in
\cite{FN1}) 
and $R\R^n$ is a closed cone which
in general is not a convex one. In this note we are concerned on the simple
problem: {\bf when is this cone convex?}

\end{remark}

Following the general approach in \cite{FN} and \cite {FN1} we have to develop
a geometrical method in the absence of analytic formalism.


\section{Projection on a halfspace}

In \cite[Proposition 10]{FN}, we provided a formula for projecting onto a hyperplane. It 
appears that applying the analytical method encounters significant 
difficulties when attempting to project onto more general cones.

The geometric perspective offers a different approach to the problem. While the projection onto a halfspace using geometric methods may hold some intrinsic interest, it is primarily included here as preparatory material for further investigation, particularly in cases where the method described in \cite{FN} proves insufficient.

The mapping $A: \R^n \to \R^n$ will be called \emph{convex additive} on the convex set $\mc M$
if for any two elements $x$ and $y$ in $\mc M$ and any $t\in [0,1]$ it holds
$$A(tx+(1-t)y)=tAx + (1-t)Ay.$$

\begin{assertion} \label{a}
If $A$ is convex additive on the convex set $\mc M$, then $A(\mc M)$ is convex.
\end{assertion}

\begin{assertion}\label{b}
Take the smooth, closed convex surface $\Si$  with $0$ in its interior. Then, the
Minkowski $\si$ functional (or gauge) with center $0$ will be an asymmetric norm (see for 
example \cite{Rockafellar1970}). That is, it
satisfies the conditions (i)-(iii) of Definition \ref{metric}. 
\end{assertion}
 
The gauge $\si$ determines a \emph{distance} of the points $x$ and $y$ by $d(x,y)=\si (x-y).$
If  $\Si$ is symmetric, then $d(x,y)=d(y,x)$ and the gauge is a norm.

Let $$\Hyp=\{x\in \R^n: \lng a,x \rng =0\}$$ with $a\in \R^n , \|a\|=1$ 
be the \emph{hyperplane through $0$ with normal $a$}. It defines two closed halfspaces:
$$\Hyp_+=\{x\in \R^n : \lng a,x\rng \geq 0\},\quad 
\Hyp_-=\{x\in \R^n : \lng a,x\rng \leq 0\},$$
and respectively two open halfspaces $$\Hyp^+=\{x\in \R^n : \lng a,x\rng >
0\},\quad \Hyp^-=\{x\in \R^n : \lng a,x\rng < 0\}.$$

Consider the closed halfspace $\Hyp_-$ and take $x\in \Hyp^+$. 
The projection by  $\si$ of $x$ to $\Hyp_-$ is geometrically realized
as follows: Translate $\Si$ with the center in $x$ : $ x+\Si$ and dilate or contract 
homothetically with center $x$ the translated sphere $x+\Si$ until 
its homothetical image touches $\Hyp$ in a single point. Denote this image by $\Si_x$.
The touching point is precisely the projection of $x$ by $\si$  on 
$\Hyp_-$, that is, $Px=\Si_x\cap \Hyp$. 
The \emph{distance of $x$ to $\Hyp_-$} is $d(x,\Hyp_-)=\si(x-Px).$

\begin{assertion}\label{c}
$$\{w\in \R^n:d(w,\Hyp_-)=d(x,\Hyp_-)\}=x+\Hyp.$$
If $x\not=y$ are two points on $x+\Hyp$ then $Px \not= Py.$
\end{assertion}
\begin{proof}
Let $x$ and $w$ be two points of distance $d$ from $H_-$.
Then, \[Px=\Hyp\cap (x+d\Si),\quad Py =\Hyp\cap (y+d\Si).\] Since $(x+d\Si) \equiv (w+d\Si)$, 
the line segment $[x,w]$ will be parallel to $\Hyp$ and $w \in x+\Hyp.$
\end{proof}

\begin{assertion}\label{konad}

The projection with respect to an asymmetric norm  $\si$ to the halfspace
$\Hyp_-$ is  convex additive on $\Hyp_+$.

\end{assertion}

\begin{proof}

Let $ x,\,y\in \Hyp^+$. 
Then, take the homotopy with center $y$ of $y+\Si$ until its
intersects $\Hyp$ in a single point which will be the projection $Py$ of $y$.
Denote
by $\Si_x$  the homothopical image of $x+\Si$, with center $x$ which realizes the projection of $x$ (that is $Px=\Si_x\cap \Hyp$).
and by $\Si_y$ the homothopical image of $y+\Si$ which realizes the projection of $y$  (that is $Py =  \Si_p\cap \Hyp$).
These two spheres $\Si_x$ and $\Si_y$ with center $x$ and respectively $y$ will be obviously similar, $\Si_x \simeq \Si_y.$ Take the
segment $[x,y]=\{tx + (1-t)y: t\in [0,1]\}$ .  A simple geometric reasoning shows that
$t\Si_x +(1-t) \Si_y = \Si_{tx+(1-t)y}$, where $\Si_{tx+(1-t)y}$ is the
sphere realizing $P(tx+(1-t)y)$. The geometric picture shows that
$P(tx+(1-t)y)=tPx+(1-t)Py.$
If $ x,\,y\in \Hyp$ the assertion is trivial. If $x\in \Hyp,\,y\in \Hyp^+$ then
$P([x,y])=[x,Py]$ and
we can apply the same geometric idea to prove the assertion.

\end{proof}

\begin{theorem}\label{felter}
If $ P$ is the projection on $\Hyp_-$ associated to a smooth  asymmetric norm $\si$,
then $P$ and $R=I-P$ are mutually polar retractions on convex cones with $R\R^n$
a semiline in $\Hyp_+$ issuing from $0$.
\end{theorem}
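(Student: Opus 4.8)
The plan is to verify the statement piece by piece: that $P$ is a retraction, that $R=I-P$ is a retraction with $P+R=I$, that $P\R^n$ is a convex cone, and finally that $R\R^n$ is a half-line through $0$ lying in $\Hyp_+$. The first three are essentially bookkeeping with what has already been set up in this section; all the real work sits in the last one.

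First I would record that $P$ is a retraction. It is single valued by the construction described above, and it is continuous: on $\Hyp_-$ it is the identity, on $\Hyp_+$ it is convex additive by Assertion~\ref{konad} (hence continuous there), and the two descriptions agree on $\Hyp$ since $d(x,\Hyp_-)\to 0$ as $x\to\Hyp$. It is idempotent because for $x\in\Hyp^+$ one has $Px\in\Hyp\subseteq\Hyp_-$ and $P$ restricted to $\Hyp_-$ is the identity, so $P^2x=Px$; and $P0=0$ because $0\in\Hyp_-$. Consequently $R=I-P$ is continuous, $R0=0$, $P+R=I$ by definition, and $R^2=(P+R)R=R$ follows formally from $P^2=P$ exactly as in the proof of Theorem~\ref{ex4}; so $P$ and $R$ form a pair of mutually polar retractions. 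Moreover $P\R^n=P(\Hyp_-)\cup P(\Hyp^+)=\Hyp_-$, a convex cone.

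Next I would locate $R\R^n$. If $x\in\Hyp_-$ then $Rx=0$, while if $x\in\Hyp^+$ then $\lng a,Rx\rng=\lng a,x\rng-\lng a,Px\rng=\lng a,x\rng>0$ because $Px\in\Hyp$; hence $R\R^n\subseteq\Hyp^+\cup\{0\}\subseteq\Hyp_+$. The heart of the argument is to show that, for $x\in\Hyp^+$, the vector $x-Px$ always points along one fixed direction. Here I would use the geometric realization of $Px$ together with Assertion~\ref{utolag}: the sphere $\Si_x$ realizing $Px$ is a similar (homothetic) copy of $\Si$, namely $\Si_x=x+t_x\Si$ with $t_x>0$, so by Assertion~\ref{utolag} the gradient of $\Si_x$ at the point $x+t_x s$ equals the gradient of $\Si$ at $s$. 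Since $\Si_x$ touches $\Hyp$ at the single point $Px$ and lies in $x+\Hyp_+$, the outward gradient of $\Si_x$ at $Px$ is the outward normal $-a$ of $\Hyp$, so the corresponding point $s_0:=(Px-x)/t_x$ of $\Si$ satisfies $\nabla_\Si s_0=-a$. This characterizes $s_0$ in terms of $\Si$ and $a$ alone, with no dependence on $x$, and it is the only such point (otherwise $P$ would not be single valued). Therefore $Px=x+t_x s_0$, so $Rx=x-Px=-t_x s_0\in\R_+(-s_0)$ for every $x\in\Hyp^+$; and since $P$ is positively homogeneous (because $\si$ is and $\Hyp_-$ is a cone), rescaling one fixed $x_0\in\Hyp^+$ shows that every positive multiple of $-s_0$ is attained, whence $R\R^n=\R_+(-s_0)$ is a half-line issuing from $0$. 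Combined with the inclusion above, $-s_0\in\Hyp^+$, so this half-line lies in $\Hyp_+$, which finishes the proof.

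The step I expect to be the obstacle is precisely the identification of the fixed direction $-s_0$: one must argue that although the touching spheres $\Si_x$ have varying centers $x$ and varying sizes $t_x$, the normalized contact point $(Px-x)/t_x$ is always the same point of the unit sphere $\Si$ — the point where the gradient of $\Si$ equals $-a$ — and that smoothness of $\Si$ makes that point meaningful and unique. Assertion~\ref{utolag}, on the invariance of gradients of smooth surfaces under homotheties and translations, is exactly the tool delivering this; once it is invoked, the homogeneity and continuity remarks needed to complete the argument are routine.
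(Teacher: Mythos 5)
Your proof is correct, but it takes a genuinely different route from the paper's. The paper works with the \emph{kernel}: it takes the single point $x$ at $\si$-distance $1$ from $\Hyp_-$ that projects to $0$, observes that the homothets of $x+\Si$ with center $0$ all stay tangent to $\Hyp$ at $0$ (so every point of the ray $[0,x$ projects to $0$), excludes any other point of $\Hyp^+$ via Assertion~\ref{c} (each translate $\Hyp+y$ contains exactly one point projecting to $0$), and then identifies $R\R^n=\ker(P)$ through the second equality of \eqref{foo1}. You instead compute the \emph{image} of $R$ directly: for every $x\in\Hyp^+$ the residual $x-Px$ is a positive multiple of the fixed vector $-s_0$, where $s_0\in\Si$ is the point with $\nabla_\Si s_0=-a$, the key tool being the invariance of gradients under homothety and translation (Assertion~\ref{utolag}); positive homogeneity then fills out the whole ray. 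What your route buys is an intrinsic identification of the direction of the $\si$-ray (the point of $\Si$ where $\lng a,\cdot\rng$ is minimal), an explicit verification of the retraction axioms that the paper leaves implicit, and independence from Assertion~\ref{c}; what the paper's route buys is brevity, since it works only at the single point $0$ and leans on the already established identity $R\R^n=\ker(P)$. One caveat: your uniqueness of $s_0$ (``otherwise $P$ would not be single valued'') relies on the tangency occurring at a single point, which is exactly the implicit hypothesis of the geometric construction preceding the theorem (smoothness alone does not rule out a contact segment); the paper's proof rests on the same implicit assumption, so this is not a gap relative to the paper, but it is worth stating that single-point contact is being presupposed.
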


\begin{proof}

	Let $x\in \Hyp^+$ be the point which projects in $0$ with $d(x,H_-)=1$. Then, 
$x+\Si$ will be tangent to $\Hyp$ at the point $0$. 
Each member of the family of homothetic images with center $0$ of $x+\Si$ will de
tangent to $\Hyp$ in $0$, and hence its center will be projected in $0$. 
The center of a such member will be on the ray $[0,x $ issuing from $0$ through $x$.
Hence, each point in this halfline $[0,x = \{tx, t\in \R_+\}$ will projected in $0$.
There is no other point in $\Hyp^+$ projecting in $0$ since $\Hyp+y$ contains by
Assertion \ref{c} only a point projecting in $0$. Thus, 
$$[0,x =\{w\in \R^n: Pw = 0\}=P^{-1}(0)=R\R^n.$$
Since $[0,x $ is convex, $P$ and $R$ are mutually polar retractions on convex cones.

\end{proof}

\begin{assertion}\label{kolcs}
Each ray $[0,y$, $y\not= 0$  is the polar of a well defined halfspace $\Hyp_-$.
	Calling $[0,y$ the $\si$ray of $\Hyp_-$, this shows that there is a one to one correspondence
between $\si$rays and closed halfspaces. We will call the halfspace defined this way the
$\si$halfspace of the $\si$ray $[0,y $. We call the ray and halfspace related this way
\emph{$\si$pair}.
\end{assertion}

\begin{proof}
	Let $y$ be at the distance $1$ from $0$. Then, $0\in y+\Si$. Let $\Hyp$ be the hyperplane
tangent to $y+\Si$ at $0$ and $\Hyp_-$ the closed halfspace determined by $\Hyp$ which does not contain $y$.
\end{proof}


\section{Symmetric projection on a minimal wedge}

The word "symmetric" in the title refers to the fact that we
carry our theory in this section regarding a \emph{ strictly convex, 
smooth and symmetric surface $\Si$,} that is, the defined $\si$ will be strictly
convex smooth norm.

We will use next in the proofs some obvious specific facts 
concerning  such a $\Si$ listed below.

\begin{assertion}\label{szimnor}
If $x\in \Si$
 then $\nabla_\Si (-x)= -\nabla_\Si x$
and $\nabla_{\Si+x}0 = -\nabla_\Si x$ since $\Si+x$ is the translation of $\Si$
by which $0$ corresponds to $-x$. (Assertion \ref{utolag}). 
\end{assertion}

A \emph{meridian of $\Si$} is the set of form  $\Si_\mc D = \Si \cap \mc D$ with $\mc D\subseteq \R^n$
a 2 dimensional subspace of $\R^n$. A connected part of a meridian is
called \emph{meridian arc}.

If $a$ is the normal of the 2 dimensional subspace $\mc D$ of $\R^n$, then
$\Si_D^+ = \{x\in \Si : \lng a,x\rng > 0$\} is the open north,
$\Si_D^- = \{x\in \Si : \lng a,x\rng < 0$\} the open south 
halfsphere (with respect to $\mc D$) of $\Si$.

\begin{assertion} \label{szembe}
If $ x,\,y\in \Si$ with $\nabla_\Sigma y= - \nabla_\Sigma x,$ then $y=-x$.
\end{assertion}

The set of the form

$$ \omega = \Hyp^1_-\cap \Hyp^2_-\subseteq \R^n$$
with $\Hyp^1$ and $\Hyp^2$ different hyperplanes in $\R^n$ will be caolled a \emph{minimal wedge}.
The $n-2$-dimensional subspace $\Hyp^1 \cap \Hyp^2$ will be said the \emph{edge of $\omega$}.

Denote by $\Omega$ the family of all the minimal wedges in $\R^n$.

\begin{lemma}\label{yy}
Suppose that $\si$ is a strictly convex smooth asymmetric norm.
Let $\mc C'$ and $\mc C"'$ be two convex cones with $\mc C''\subseteq \mc C'$.
Denote the projections with respect to $\si$ into $\mc C'$ and $\mc C''$ by $P'$ and $P''$, 
respectively. Then, $\ker(P')\subseteq \ker(P'').$
\end{lemma}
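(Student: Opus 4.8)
The plan is to exploit the characterization of the kernel of a $\varphi$-projection in terms of the gradient of the norm, together with the fact that enlarging the cone cannot shrink the polar. Recall that for a strongly quasiconvex smooth norm $\si$ and a closed convex cone $\mc C$, Theorem~5 of \cite{FN} (already invoked in the proof of Theorem~\ref{ex4}) gives
\[
\ker(P_{\mc C}^{\si}) = \lf\{x\in\R^n\setminus\{0\}:\nabla\si(x)\in\mc C\cc\rg\}\cup\{0\}.
\]
So I would first record this formula for both $P'=P_{\mc C'}^{\si}$ and $P''=P_{\mc C''}^{\si}$. The key observation is then purely order-theoretic on polars: since $\mc C''\subseteq\mc C'$, every linear functional that is nonpositive on all of $\mc C'$ is in particular nonpositive on the smaller set $\mc C''$, i.e.\ $(\mc C')\cc\subseteq(\mc C'')\cc$.

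Combining these two facts finishes the argument: if $x\in\ker(P')$ with $x\neq 0$, then $\nabla\si(x)\in(\mc C')\cc\subseteq(\mc C'')\cc$, hence $x\in\ker(P'')$; and $0\in\ker(P'')$ trivially. Therefore $\ker(P')\subseteq\ker(P'')$.

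I should be a little careful about two points. First, the statement of Theorem~\ref{ex4} and the surrounding results are phrased for norms rather than for an arbitrary strongly quasiconvex nonnegative $\varphi$; since the hypothesis of Lemma~\ref{yy} is precisely that $\si$ is a strictly convex smooth asymmetric norm, the gradient $\nabla\si(x)$ is well defined and single-valued at every $x\neq 0$ (smoothness of $\Si$), and non-negative homogeneity of $\si$ makes $P$ non-negatively homogeneous (Proposition~\ref{homo1}) so that $\ker(P)$ is a genuine cone — the formula above applies verbatim. Second, one must check that the hypotheses of Theorem~5 of \cite{FN} (strict convexity and smoothness of the sublevel sets, closedness and convexity of the target cone) hold for both $\mc C'$ and $\mc C''$; these are exactly the running assumptions, so no extra work is needed. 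The only genuine content is the monotonicity $\mc C''\subseteq\mc C'\implies(\mc C')\cc\subseteq(\mc C'')\cc$, which is immediate from the definition of the polar; there is no real obstacle here, the lemma being essentially a bookkeeping consequence of the gradient characterization of the kernel.
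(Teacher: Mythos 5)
Your argument is correct in substance, but it takes a genuinely different route from the paper. The paper's proof is a two-line minimization comparison that needs no differentiability at all: if $x\in\ker(P')$ then $\si(x)=\min\{\si(x-w):w\in\mc C'\}$, and since $0\in\mc C''\subseteq\mc C'$ one gets
\[
\si(x)=\si(x-0)\ \ge\ \min\{\si(x-v):v\in\mc C''\}\ \ge\ \min\{\si(x-w):w\in\mc C'\}=\si(x),
\]
so $0$ attains the minimum over $\mc C''$ and, by uniqueness of the minimizer of a strongly quasiconvex function, $P''x=0$. That argument uses only $0\in\mc C''\subseteq\mc C'$ and works for any generalized projection in the sense of Definition~\ref{foodef1}; smoothness and strict convexity of $\si$ play no role. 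Your route instead passes through the gradient characterization $\ker(P_{\mc C}^{\si})=\lf\{x\ne 0:\nabla\si(x)\in\mc C\cc\rg\}\cup\{0\}$ from Theorem~5 of \cite{FN} plus the antitonicity of the polar, which is fine as far as it goes, but it imports hypotheses the lemma does not need and that you should verify carefully: the paper invokes that theorem only for genuine (symmetric) norms and pointed closed convex cones, whereas this lemma is stated for an asymmetric norm and is later applied to halfspaces and minimal wedges, which are not pointed; you would also need closedness of both cones for the characterization, while the elementary argument needs closedness only to have the minima attained. So your proof buys a structural description of the kernels (useful elsewhere in the paper, e.g.\ Theorem~\ref{ex4} and Proposition~\ref{7peld}), but at the cost of stronger standing assumptions and reliance on an external result; the paper's proof is more elementary and more general.
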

\begin{proof}
	Let $x \in \ker(P'),$ that is $$\si(x) =\min\{\si(x-w): w\in \mc \Conv'\}.$$
On the other hand 
$$\si (x) = \si (x-0)\geq \min\{\si(x-v): v\in \mc C''\} \geq \min\{\si(x-w: w\in \mc \Conv'\}=\si(x) 
,$$
since $0\in \mc C''$ and $\mc C''\subseteq \mc \Conv'$. Therefore, $x\in
\ker(P'').$
\end{proof}

\begin{proposition}\label{w1}
If $\omega = \Hyp^1_-\cap \Hyp^2_-\subseteq \R^n$ is a minimal wedge with
$\Xi =\{\Hyp^\lambda_-\}$  the family of all the halfspaces containing $\omega$, then
denoting $P = P_\omega$, $R=R_\omega:=I-P$, the polar set $R\R^n$ of $\omega$ can  be 
constructed as follows:
For each $\Hyp^\lambda$ consider its $\si$ray $[0, x_\lambda: \si(x_\lambda)=1.$ Moving $\Hyp^\lambda$
continuously from $\lambda =1$ to $\lambda= 2$, $x_\lambda$ will describe a 
simple arc $\gamma \subseteq \Si$ and
$$R\R^n:=\cup \{[0,x_\lambda : x_\lambda \in \gamma\}.$$
\end{proposition}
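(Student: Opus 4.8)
The plan is to prove the set equality by two inclusions, after first recasting the objects involved. By Theorem~\ref{footet}, $R\R^n=\ker(P_\omega)$, where $P_\omega$ is the $\si$-projection onto the closed convex cone $\omega$. For each halfspace $\Hyp^\lambda_-\in\Xi$, Theorem~\ref{felter} together with Assertion~\ref{kolcs} identifies $\ker(P^\lambda)$, the kernel of the $\si$-projection $P^\lambda$ onto $\Hyp^\lambda_-$, with the $\si$ray $[0,x_\lambda\rangle$, where $\si(x_\lambda)=1$. So the claim becomes $\ker(P_\omega)=\bigcup_{\lambda}\ker(P^\lambda)$, the union running over all $\Hyp^\lambda_-\in\Xi$. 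Writing $a^1,a^2$ for the unit normals of $\Hyp^1,\Hyp^2$, a closed halfspace $\{z:\lng a,z\rng\le 0\}$ with $\|a\|=1$ contains $\omega=\Hyp^1_-\cap\Hyp^2_-$ exactly when $a\in\omega\cc=\cone\{a^1,a^2\}$, so the admissible normals form the circular arc of $\mc S^n\cap\spa\{a^1,a^2\}$ joining $a^1$ to $a^2$ inside $\cone\{a^1,a^2\}$; we parametrize it by $\lambda\in[1,2]$ with $a^1,a^2$ at the endpoints. Since the tangency condition defining $x_\lambda$ forces, via Assertion~\ref{szimnor}, the identity $\nabla_\Si x_\lambda=a^\lambda$, and the Gauss map $\nabla_\Si:\Si\to\mc S^n$ of the smooth strictly convex surface $\Si$ is a homeomorphism, $\lambda\mapsto x_\lambda$ is a homeomorphism of $[1,2]$ onto a simple arc $\gamma\subseteq\Si$, as the statement asserts.

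The inclusion $\bigcup_\lambda[0,x_\lambda\rangle\subseteq R\R^n$ is immediate: for each $\lambda\in[1,2]$ the convex cones satisfy $\omega\subseteq\Hyp^\lambda_-$, so Lemma~\ref{yy} gives $[0,x_\lambda\rangle=\ker(P^\lambda)\subseteq\ker(P_\omega)=R\R^n$.

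For the reverse inclusion, let $x\in\ker(P_\omega)$ with $x\neq 0$ (the point $0$ lies on every ray $[0,x_\lambda\rangle$). Then $\si(x)=d(x,\omega)=\min\{\si(x-w):w\in\omega\}$, attained only at $w=0$ by strict convexity of $\si$. Put $D=\{z\in\R^n:\si(x-z)\le\si(x)\}$; this is a convex body whose boundary $\Si_x=\{z:\si(x-z)=\si(x)\}$ is the homothetic image of $\Si$ with center $x$ and ratio $\si(x)$, hence a smooth strictly convex surface, with $0\in\Si_x$ and $x\in\intr D$. Any point of $D\cap\omega$ is a minimizer of $\si(x-\cdot)$ over $\omega$, so $D\cap\omega=\{0\}$, i.e.\ $\omega\cap\co\Si_x=\{0\}$. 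Applying the special hyperplane separation theorem to the surface $\Si_x$, the convex set $\omega$ and their unique common point $0$, we obtain the tangent hyperplane $\Hyp$ of $\Si_x$ at $0$ with $\omega\subseteq\Hyp_+$ and $\Si_x\setminus\{0\}\subseteq\Hyp^-$. Naming $a^\lambda$ the unit normal for which $\Hyp_+=\{z:\lng a^\lambda,z\rng\le 0\}$, we get $\omega\subseteq\Hyp^\lambda_-:=\Hyp_+$, hence $a^\lambda\in\omega\cc\cap\mc S^n$ and $\Hyp^\lambda_-\in\Xi$ with $\lambda\in[1,2]$; moreover, since $\Hyp^{\lambda+}\cup\{0\}$ is convex and contains $\Si_x\setminus\{0\}$, it contains $D=\co\Si_x$, so $D\setminus\{0\}\subseteq\Hyp^{\lambda+}$ and in particular $x\in\Hyp^{\lambda+}$. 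Finally, $0\in\Hyp^\lambda_-$ gives $d(x,\Hyp^\lambda_-)\le\si(x)$, while every $z\in\Hyp^\lambda_-$ lies outside $\intr D$ (as $\intr D\subseteq D\setminus\{0\}\subseteq\Hyp^{\lambda+}$, disjoint from $\Hyp^\lambda_-$), so $\si(x-z)\ge\si(x)$ for all such $z$; thus $d(x,\Hyp^\lambda_-)=\si(x)$, attained uniquely at $z=0$, i.e.\ $P^\lambda x=0$ and $x\in\ker(P^\lambda)=[0,x_\lambda\rangle$. Combining the two inclusions proves the proposition.

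The main obstacle is entirely in the reverse inclusion: one must correctly set up the special hyperplane separation theorem on the $\si$-ball $D$ centered at $x$ and recognize that the tangent hyperplane it returns is precisely the bounding hyperplane of that halfspace $\Hyp^\lambda_-\in\Xi$ whose $\si$ray passes through $x$. Care is also needed to confirm that the parameter $\lambda$ so obtained lies in $[1,2]$ --- equivalently that $a^\lambda\in\cone\{a^1,a^2\}$ --- which is guaranteed by $\omega\subseteq\Hyp^\lambda_-$; everything else reduces to bookkeeping with the open and closed halfspaces and to the uniqueness of the $\si$-nearest point coming from strict convexity.
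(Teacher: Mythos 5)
Your proof is correct and follows essentially the same route as the paper: the inclusion $\bigcup_\lambda[0,x_\lambda\subseteq R\R^n$ via Lemma \ref{yy}, Theorem \ref{felter} and the second formula of \eqref{foo1}, and the reverse inclusion by observing that the sphere realizing the projection of a point of $\ker(P_\omega)$ is separated from $\omega$ by its tangent hyperplane at $0$, whose halfspace therefore belongs to $\Xi$. You merely spell out in more detail what the paper leaves implicit (the explicit use of the special hyperplane separation theorem, the verification that $P^\lambda x=0$, and the Gauss-map argument for $\gamma$ being a simple arc).
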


\begin{proof}
Denote

$$\mc A:=\cup \{[0,x_\lambda : x_\lambda \in \gamma\}.$$

	For every $\Hyp^\lambda_-$, $[0, x_\lambda$ is a set projected  on this halfspace in $0$.
Since $\omega \subseteq \Hyp^\lambda_-$ and $0\in \omega\cap \Hyp^\lambda_-$ we
can apply Lemma \ref{yy} and the second formula of \eqref{foo1} to conclude that 
$R_{\Hyp^\lambda}(\R^n) \subseteq R_\omega (\R^n)=R\R^n.$
On the other hand, by using Theorem \ref{felter}, we have 
$$R_{\Hyp^\lambda}(\R^n) = [0, x_\lambda.$$
	Hence, $\mc A\subseteq R\R^n.$

If $z\in R\R^n$ is a point projecting in $0\in \omega$, then if $z+ \Si$ 
is the sphere realizing the projection, the tangent
hyperplane $\Hyp$ to this sphere in $0$ will be a separating hyperplanne
of this sphere and $\omega$ hence $H_-$ suport $\omega$ at $0$ and must 
be a member of the family $\Xi$. Thus, $R\R^n\subseteq \mc A.$

\end{proof}

\begin{remark}\label{yyy}
Using more consistently  the terminology introduced in Assertion \ref{kolcs},
we can formulate this theorem by saying that \emph{the polar  of $\omega$ is
the union of the $\si$rays of the supporting halfspaces of $\omega$}. 
\end{remark}

\begin{proposition}\label{w2}

Suppose that $\gamma$ is a simple  arc containing its endpoints in an open
halfsphere of $\Si$. Suppose that $\nabla(\gamma)=\{\nabla x: x\in \gamma\}$
is in a two dimensional subspace $\mc D$ of $\R^n$. Then,
$$\omega =\bigcap\lf\{\Hyp_-(-\nabla x):\,x\in \gamma \rg\}$$ 
is a minimal wedge with the edge $\mc{D}^\perp$. 
Furthermore, denoting $P=P_\omega$ and $R=I-P$, we have
$$ R\R^n= \cup \lf\{[0,x : x\in \gamma \rg\}.$$

	In other words, $R\R^n$ is the cone generated by $\gamma$. (See also Remark \ref{kone}.)

\end{proposition}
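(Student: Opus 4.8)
The plan is to verify that the hypotheses of Proposition \ref{w1} are met and then read off the conclusion. First I would show that $\omega$ as defined is a minimal wedge. The set $\nabla(\gamma)$ lies in the $2$-dimensional subspace $\mc D$; since $\gamma$ is a simple arc with endpoints in a single open halfsphere, its image under the gradient map $\nabla_\Si$ is a simple arc $\nabla(\gamma)\subseteq\Si\cap\mc D$ that does not wrap all the way around the meridian $\Si\cap\mc D$ (this is where the open-halfsphere hypothesis is used — it keeps the endpoints and hence the arc on one side, so the intersection of halfspaces does not collapse). Let $x^1,x^2$ be the endpoints of $\gamma$ and put $a^i=-\nabla_\Si x^i$, $i=1,2$. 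Because $\nabla_\Si$ is injective on $\Si$ when $\si$ is strictly convex and smooth (Assertion \ref{szembe} and the surrounding remarks), the hyperplanes $\Hyp(a^1)$ and $\Hyp(a^2)$ are distinct, so $\omega=\Hyp_-(a^1)\cap\Hyp_-(a^2)$ — the intersection over the whole arc reduces to the intersection over the two endpoints, since every intermediate halfspace $\Hyp_-(-\nabla x)$ already contains that corner set (each $-\nabla x$ is a convex-conical combination, modulo normalization, of $a^1$ and $a^2$ within $\mc D$). Thus $\omega$ is a minimal wedge, and its edge is $\Hyp(a^1)\cap\Hyp(a^2)=\{v:\lng a^1,v\rng=\lng a^2,v\rng=0\}=\mc D^\perp$, since $a^1,a^2$ span $\mc D$.

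Next I would identify the family $\Xi$ of supporting halfspaces of $\omega$ at $0$ with exactly $\{\Hyp_-(-\nabla x):x\in\gamma\}$. A closed halfspace $\Hyp_-(a)$, $\|a\|=1$, supports $\omega$ at $0$ iff $\lng a,w\rng\le 0$ for all $w\in\omega$, i.e. iff $a\in\omega\cc$. Since $\omega=\Hyp_-(a^1)\cap\Hyp_-(a^2)$ with $a^1,a^2$ spanning the $2$-plane $\mc D$, the polar $\omega\cc$ is the $2$-dimensional convex cone $\cone\{a^1,a^2\}\subseteq\mc D$, and its intersection with $\mc S^n$ is precisely the shorter meridian arc of $\Si\cap\mc D$ joining $a^1$ to $a^2$ — which is exactly $\{-\nabla_\Si x:x\in\gamma\}$, again using that $\gamma$ (hence $\nabla(\gamma)$) stays within one open halfsphere so that it traces this arc monotonically rather than overshooting. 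Hence the correspondence $\lambda\leftrightarrow x$ in Proposition \ref{w1} can be taken to be $\Hyp^\lambda=\Hyp(-\nabla x)$, $x\in\gamma$, and the $\si$ray of $\Hyp^\lambda$ is then $[0,x$ by the defining property of the $\si$ray (the tangent hyperplane to $x+\Si$ at $0$ has normal $-\nabla_\Si x$, by Assertion \ref{szimnor}).

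With these identifications in place, Proposition \ref{w1} applies verbatim: moving $x$ along $\gamma$ from one endpoint to the other sweeps the arc $\gamma\subseteq\Si$ itself in the role of the arc there, and therefore
\[
	R\R^n=\bigcup\lf\{[0,x : x\in\gamma\rg\},
\]
which by non-negative homogeneity of $P$ (Proposition \ref{homo1}, applicable since $\si$ is a norm) is exactly $\cone\gamma$. I expect the main obstacle to be the careful justification that the open-halfsphere hypothesis forces $\nabla(\gamma)$ to be a ``short'' arc of the meridian $\Si\cap\mc D$ — i.e. that it does not contain a pair of antipodal points and hence that $\cone\{-\nabla x:x\in\gamma\}$ is a genuine minimal (two-hyperplane) wedge rather than a degenerate object or a full halfspace; once that monotonicity is pinned down, everything else is a direct invocation of Proposition \ref{w1} and Theorem \ref{felter}.
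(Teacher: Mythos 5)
Your proposal is correct and follows essentially the same route as the paper: the open-halfsphere hypothesis together with Assertion \ref{szembe} rules out antipodal normals so that $\omega$ is a minimal wedge with edge $\mc D^\perp$, Assertion \ref{szimnor} identifies $[0,x$ as the $\si$ray of $\Hyp_-(-\nabla x)$, and both inclusions then come from Proposition \ref{w1} (your explicit identification of all supporting halfspaces of $\omega$ with $\{\Hyp_-(-\nabla x):x\in\gamma\}$ is exactly the step the paper compresses into ``a reasoning similar to the proof of Proposition \ref{w1}''). One cosmetic slip: $\nabla(\gamma)$ lies on the Euclidean unit circle of $\mc D$, not on the meridian $\Si\cap\mc D$ of $\Si$, but your argument only uses that it is a compact simple arc of unit normals in $\mc D$ without antipodal pairs, so nothing is affected.
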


\begin{proof}

	Since $\gamma$ is compact and $\nabla$ is continuous on $\Si$, the later being smooth,  
$\nabla (\gamma)$ is a closed set, which, by the hypothesis and Assertion
\ref{szembe}, cannot contain diametrically opposite
vectors. The set $\omega$ is a minimal wedge with the edge $\mc{D}^\perp$.
Let $x\in \gamma$. Let $\Hyp(\nabla (x))+x$ be  the hyperplane tangent 
to $\Si$ in $x$ and with the normal $\nabla (x)$. 
Then, the sphere $\Si+x$ will be tangent to the hyperplane $\Hyp(\nabla (x))$ at $0$, and by
Assertion \ref{szimnor} the normal of $\Si + x$ at $0$ will be $-\nabla (x)$. Thus
$[0,x $ will be the $\si$ray of $\Hyp_-(-\nabla (x))$. Since 
	$\Hyp_-(-\nabla(x))$ supports  $\omega$, by using Remark \ref{yyy}, $[0,x \subseteq R\R^n$.
A reasoning similar to the one in the proof of
Proposition \ref{w1} concludes the proof.

\end{proof}

Let $\Sigma\subseteq\R^n$ be a bounded closed strictly convex (symmetric with
respect to 0) surface. The arc $\gamma$ on a meridian of $\Si$ is said
\emph{coherent} if $\dim(\spa(\nabla(\gamma))=2.$

\begin{theorem}\label{ek}
Let $\omega \in \Omega$, $P_\omega$ be the $\si$-projection into $\omega$ and  
$R_\omega = I- P_\omega.$
Then, the following assertions are equivalent : 
\begin{enumerate}
\item $P_\omega$ and $R_\omega$ are mutually polar retractions on convex cones;
\item $P_\omega$ is convex additive on $\R_\omega (\R^n)$;
\item Let $\omega =\Hyp^1_-\cap \Hyp^2_-$, 
	$$u\in \si\textrm{\em ray}(\Hyp^1_-),\textrm{ }d(0,u)=1,$$
	$$v\in \si\textrm{\em ray}(\Hyp^2_-),\textrm{ }d(0,v)=1,$$
	(hence $u,\,v \in \Si$). Then,
the meridian arc $\gamma$ on $\Si$ with the endpoints $u$ and $v$ is coherent.
\end{enumerate}
\end{theorem}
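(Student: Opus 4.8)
The plan is to prove the cycle of implications $(3)\Rightarrow(1)\Rightarrow(2)\Rightarrow(3)$, exploiting the geometric description of $R_\omega\R^n$ already obtained in Propositions \ref{w1} and \ref{w2} together with the convex-additivity machinery of Section 5.

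\emph{Step 1: $(3)\Rightarrow(1)$.} Assume $\gamma$, the meridian arc on $\Si$ joining $u$ and $v$, is coherent, i.e.\ $\nabla(\gamma)$ spans a $2$-dimensional subspace $\mc D$. Since $u\in\si\mathrm{ray}(\Hyp^1_-)$ and $v\in\si\mathrm{ray}(\Hyp^2_-)$, Assertion \ref{szimnor} gives $\nabla u=-a^1$ and $\nabla v=-a^2$, where $a^1,a^2$ are the (unit) normals of $\Hyp^1,\Hyp^2$; thus $\{\nabla x:x\in\gamma\}\subseteq\mc D$ and contains $-a^1,-a^2$. First one checks that $\gamma$ lies in an open halfsphere of $\Si$: by Assertion \ref{szembe} a coherent meridian arc cannot contain two diametrically opposite points, and one argues that $\nabla(\gamma)$ therefore cannot contain antipodal vectors, so that $\bigcap\{\Hyp_-(-\nabla x):x\in\gamma\}$ is a genuine minimal wedge. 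Now apply Proposition \ref{w2} with this $\gamma$: it yields a minimal wedge $\omega'=\bigcap\{\Hyp_-(-\nabla x):x\in\gamma\}$ with edge $\mc D^\perp$ and $R_{\omega'}\R^n=\cone(\gamma)$, a convex cone. The key point is that $\omega'=\omega$: both are minimal wedges, $\omega'\subseteq\Hyp^1_-\cap\Hyp^2_-=\omega$ since $-a^1,-a^2\in\nabla(\gamma)$, and conversely $\omega\subseteq\omega'$ because, by Proposition \ref{w1} applied to $\omega$, every supporting halfspace $\Hyp^\lambda_-$ of $\omega$ has its $\si$ray endpoint $x_\lambda$ on the meridian arc from $u$ to $v$, so $-\nabla x_\lambda$ is a normal of a supporting halfspace — i.e.\ $\gamma$ coincides with the arc described in Proposition \ref{w1} and the two families of halfspaces agree. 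Hence $R_\omega\R^n=\cone(\gamma)$ is convex, and since $P_\omega,R_\omega$ are always mutually polar retractions (Theorem \ref{footet}, Remark \ref{kone}), with $P_\omega\R^n=\omega$ also convex, (1) follows.

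\emph{Step 2: $(1)\Rightarrow(2)$.} Suppose $P_\omega$ and $R_\omega$ are mutually polar retractions on convex cones, so $R_\omega\R^n$ is a closed convex cone. By Proposition \ref{w1}, $R_\omega\R^n=\cone(\gamma)$ with $\gamma\subseteq\Si$ the simple arc traced by the $\si$ray endpoints of the supporting halfspaces. For $x\in\Hyp^1_-\cap\Hyp^2_-\cap$(north region) the projection $P_\omega x$ is realized, exactly as in the halfspace case of Assertion \ref{konad}, by the $\si$sphere $\Si_x$ homothetic to $x+\Si$ and touching the relevant face of $\omega$; restricting to points of $R_\omega\R^n$ one shows, by the same "intermediate $\si$sphere" argument as in Assertion \ref{konad} (now applied on each semiline $[0,x_\lambda$ and interpolating across $\gamma$), that for $z,z'\in R_\omega\R^n$ and $t\in[0,1]$ the sphere realizing $P_\omega(tz+(1-t)z')$ is $t\Si_z+(1-t)\Si_{z'}$, forcing $P_\omega(tz+(1-t)z')=tP_\omega z+(1-t)P_\omega z'$. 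This is convex additivity of $P_\omega$ on $R_\omega\R^n$.

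\emph{Step 3: $(2)\Rightarrow(3)$.} This is the step I expect to be the main obstacle, since it must \emph{produce} the spanning condition on $\nabla(\gamma)$ from an additivity hypothesis. The idea: if $P_\omega$ is convex additive on $R_\omega\R^n=\cone(\gamma)$, then by Assertion \ref{a} the image $P_\omega(\cone(\gamma))$ is convex; but one can identify $P_\omega(\cone(\gamma))$ with (a positively homogeneous cone over) a specific curve on the edge-complementary part of $\partial\omega$, and for this image to be convex the curve $\gamma$ must be planar — more precisely, its gradient curve $\nabla(\gamma)$ must lie in a single $2$-plane, which is exactly coherence. Concretely, suppose $\gamma$ is \emph{not} coherent, so $\nabla(\gamma)$ spans a subspace of dimension $\ge3$; pick $x_1,x_2,x_3\in\gamma$ with $\nabla x_1,\nabla x_2,\nabla x_3$ linearly independent, let $z_i\in[0,x_i$ be points projecting to $0$, and compare the projection of a convex combination $z=\sum t_iz_i$ computed two ways: geometrically via the touching $\si$sphere $\Si_z$, versus via $\sum t_iP_\omega z_i=0$. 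One shows the touching sphere $\Si_z$ cannot be tangent to $\omega$ at $0$ (its normal at $0$, forced to be a convex combination of the $-\nabla x_i$, is not a normal of any supporting halfspace of $\omega$ once the $\nabla x_i$ fail to be coplanar with the edge), so $P_\omega z\ne 0=\sum t_iP_\omega z_i$, contradicting (2). Dually: the wedge $\omega$ has edge of dimension $n-2$, so its outer normal cone at $0$ is exactly $2$-dimensional, namely $\cone\{a^1,a^2\}$; convex additivity of $P_\omega$ on $R_\omega\R^n$ pins every $-\nabla x$, $x\in\gamma$, into this $2$-plane $\spa\{a^1,a^2\}=\mc D$, giving $\dim\spa(\nabla(\gamma))=2$, i.e.\ coherence. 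The delicate part is making the "touching $\si$sphere of a convex combination is the convex combination of the touching $\si$spheres" computation rigorous off a single flat face and across the curved part of $\partial\omega$, and ruling out degenerate tangencies at the edge $\mc D^\perp$; a careful separation argument using the Special hyperplane separation theorem, applied to $\Si_z$ and $\omega$, is what I would use to close this gap.
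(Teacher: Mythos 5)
Your Step 1 ((3)$\Rightarrow$(1)) is essentially the paper's route: it is the content of Propositions \ref{w1} and \ref{w2}, and your identification $\omega'=\omega$ fills in a detail the paper leaves implicit. The rest of your cycle has a genuine gap, and it stems from missing the one observation the paper's proof turns on: by the second identity in \eqref{foo1}, $R_\omega(\R^n)=\ker(P_\omega)$, and on this set $P_\omega$ vanishes identically; so for $z,z'\in\ker(P_\omega)$ convex additivity reads $P_\omega(tz+(1-t)z')=tP_\omega z+(1-t)P_\omega z'=0$, which is word for word the statement that $\ker(P_\omega)$ is convex. Hence (1)$\iff$(2) is a two-line triviality. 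Your Step 2 instead tries to rerun the homothety argument of Assertion \ref{konad} on the wedge (``the sphere realizing $P_\omega(tz+(1-t)z')$ is $t\Si_z+(1-t)\Si_{z'}$''). As written this cannot be sound: hypothesis (1) never actually enters that computation, so if the claim held it would establish (2) for every minimal wedge, contradicting the very equivalence with coherence the theorem asserts; the touching spheres of $z$ and $z'$ may be tangent to $\omega$ along different supporting hyperplanes, and their Minkowski combination need not touch $\omega$ at all in the required way.

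Your Step 3 ((2)$\Rightarrow$(3)) is only a sketch, and you say so yourself: the separation argument ruling out three non-coplanar gradients is exactly the part that would need proof, and it is not supplied. The paper avoids this direction entirely: once (1)$\iff$(2) is the kernel-vanishing triviality above, the only geometric content left is (1)$\iff$(3), which is precisely what Propositions \ref{w1} and \ref{w2} deliver ($R_\omega\R^n$ is the union of the $\si$rays over the arc $\gamma$ of $\si$ray endpoints, and this cone is convex if and only if $\dim(\spa(\nabla(\gamma)))=2$). So the fix is structural rather than technical: replace your Steps 2 and 3 by the observation that $P_\omega\equiv 0$ on $R_\omega(\R^n)$, and route the remaining equivalence through the two propositions, as the paper does.
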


\begin{proof}
$\,$

\begin{enumerate}
	\item[ ] 1$\iff$2: 
From the second formula in \eqref{foo1}, we know that $R_\omega(\R^n) =
\ker(P_\omega)$. 
Hence, the convexity of the latter
set is equivalent to \[u,\,v\in \ker(P_\omega)\iff tu+(1-t)v \in \ker(P_\omega)\quad \forall 
t\in [0,1].\] This holds if and only if $P_\omega$ is convex additive on
$R_\omega(\R^n) = \ker(P_\omega)$, since 
\[P_\omega (tu+(1-t)v)=t P_\omega u+(1-t) P_\omega v = 0.\]
\item[ ] 1$\iff$3:
According to Propositions \ref{w1} and \ref{w2}, $R\R^n$ will be
a convex cone if and only if $\dim(\spa(\nabla(\gamma))) = 2.$ 
\end{enumerate}

\end{proof}

\begin{lemma}\label{fontos}

If $\Si\subseteq \R^n$ is a  bounded closed strictly convex (symmetric with respect to $0$)  surface
and $A:\R^n\to \R^n$ is a nonsingular linear map, then
\begin{enumerate}
\item $A\Si$ is a  bounded closed strictly convex (symmetric with respect to $0$)  surface;
\item If $\Si_\mc D= \Si\cap \mc D$ is a meridian of $\Si$, then 
$A\Si_{A\mc D}=A\Si\cap A\mc D$ is a meridian of $A\Si$;
\item Coherent meridian arcs on $\Si$ are transformed by $A$ in coherent meridian arcs on $A\Si$.
\end{enumerate}

\end{lemma}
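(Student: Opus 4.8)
The three items are geometric statements about how a nonsingular linear map $A$ acts on a strictly convex symmetric surface $\Si$ and its meridians. The plan is to dispose of items (1) and (2) by elementary linear-algebra observations, and to reduce item (3) to a compatibility statement relating the gradient map of $A\Si$ to that of $\Si$ via the adjoint $A^{-\top}$.

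For item (1): a nonsingular linear map is a homeomorphism of $\R^n$, so it carries the bounded closed set $\Si$ to a bounded closed set $A\Si$; it is the boundary of the convex body $A(\co\Si)$, hence $A\Si$ is a closed convex surface. For strict convexity I would argue that if $[Au,Av]\subseteq A\Si$ for $u,v\in\Si$, then applying $A^{-1}$ (which is linear, hence affine, hence sends segments to segments) gives $[u,v]\subseteq\Si$, forcing $u=v$ by strict convexity of $\Si$; equivalently $A\Si$ contains no line segment. Symmetry with respect to $0$ is immediate: $A(-\Si)=-A\Si=A\Si$ since $A$ is linear and $\Si=-\Si$. For item (2): $A\mc D$ is again a 2-dimensional subspace (since $A$ is nonsingular), and $A(\Si\cap\mc D)=A\Si\cap A\mc D$ because $A$ is a bijection; hence $A\Si_{\mc D}$ is by definition a meridian of $A\Si$. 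These two items involve no real obstacle.

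For item (3): the notion of "coherent" involves the gradient (unit normal) map $\nabla=\nabla_\Si$, and here lies the only real content. The key fact I would establish is that for $x\in\Si$ with $\nabla_\Si x=a$, the tangent hyperplane to $A\Si$ at $Ax$ has normal proportional to $A^{-\top}a$; this is because the tangent hyperplane $\{y:\lng a,y-x\rng=0\}$ of $\Si$ at $x$ is mapped by $A$ to $\{z:\lng a,A^{-1}z-x\rng=0\}=\{z:\lng A^{-\top}a,z-Ax\rng=0\}$, which is the supporting hyperplane of $A\Si$ at $Ax$ (support is preserved because $A$ maps the convex body $\co\Si$ to $\co(A\Si)$). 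Smoothness of $A\Si$ (single supporting hyperplane at each point) follows likewise. Consequently $\nabla_{A\Si}(Ax)$ is the normalization of $A^{-\top}(\nabla_\Si x)$, so, up to positive scalars, $\nabla(A\gamma)=A^{-\top}(\nabla(\gamma))$. Therefore $\spa(\nabla(A\gamma))=A^{-\top}\bigl(\spa(\nabla(\gamma))\bigr)$, and since $A^{-\top}$ is nonsingular it preserves dimension: $\dim\spa(\nabla(A\gamma))=\dim\spa(\nabla(\gamma))$. In particular this equals $2$ precisely when $\gamma$ is coherent, so $A$ sends coherent meridian arcs to coherent meridian arcs (which are meridian arcs by item (2)).

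The main obstacle is the bookkeeping in item (3): verifying carefully that $A$ transports supporting hyperplanes of $\co\Si$ to supporting hyperplanes of $\co(A\Si)$ (so that the normal really transforms by $A^{-\top}$ up to positive scaling) and that no sign is lost, and confirming that the span computation is insensitive to the normalization that makes $\nabla$ a unit vector. Items (1) and (2) are routine; the normal-transformation lemma is the one step worth writing out in full.
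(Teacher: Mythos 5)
Your proposal is correct, and items (1)--(2) match the paper (which disposes of them in one line each, noting that a supporting hyperplane meeting $\Si$ in a single point is carried by $A$ to a supporting hyperplane meeting $A\Si$ in a single point). For item (3), however, your key lemma differs from the paper's. You derive the explicit transformation law of the gradient: the tangent hyperplane $\{y:\lng a,y-x\rng=0\}$ at $x$ is mapped to $\{z:\lng A^{-\top}a,z-Ax\rng=0\}$, so $\nabla_{A\Si}(Ax)$ is a positive multiple of $A^{-\top}\nabla_\Si x$, whence $\spa(\nabla(A\gamma))=A^{-\top}\bigl(\spa(\nabla(\gamma))\bigr)$ and the dimension is exactly preserved by nonsingularity of $A^{-\top}$. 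The paper instead avoids any formula for the transformed normal: it takes three tangent hyperplanes $\Hyp^1,\Hyp^2,\Hyp^3$ at points of $\gamma$, observes that coherence means $\dim\bigl((\Hyp^1-x^1)\cap(\Hyp^2-x^2)\cap(\Hyp^3-x^3)\bigr)=n-2$, uses that $A$ commutes with intersections and preserves dimensions of subspaces, and concludes that the gradients of $A\Si$ at $Ax^1,Ax^2,Ax^3$ lie in the orthogonal complement of an $(n-2)$-dimensional subspace, hence are coplanar. Both arguments rest on the same underlying fact (tangent hyperplanes go to tangent hyperplanes), but your route buys a clean equivalence — $\gamma$ is coherent if and only if $A\gamma$ is, since the span is transported by a fixed nonsingular map — and makes the sign/normalization bookkeeping explicit, while the paper's dimension-counting version is shorter and sidesteps the adjoint entirely (at the cost of only directly giving the implication needed, with the converse obtained by applying the statement to $A^{-1}$). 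Your one flagged obstacle, that support is preserved with the correct orientation, is handled exactly as you suggest: $\Si\subseteq\Hyp_-+x$ gives $A\Si\subseteq\{z:\lng A^{-\top}a,z-Ax\rng\le 0\}$, so no sign is lost and the positive rescaling needed to normalize does not affect the span.
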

\begin{proof}

1. $A\Si$ is obviously convex. If $\Hyp$ is the supporting hyperplane of $\Si$ in $x$, that is
$\Si\cap \Hyp= x$ then $A\Si \cap A\Hyp = Ax$ and $A\Hyp$ is hyperplane.

2. Is obvious.

3. Let $\gamma $ be a coherent arc on $\Si$. If $\Hyp^1,\,\Hyp^2,\,\Hyp^3$ are tangent 
hyperplanes to $\Si$ at different points $x^1,\,x^2,\,x^3 \in \gamma$, then by definition
$\dim\lf((\Hyp^1-x^1) \cap (\Hyp^2-x^2) \cap (\Hyp^3-x^3)\rg) = n-2.$ Hence,

\begin{align*} & \dim (A\Hyp^1-Ax^1) \cap (A\Hyp^2-Ax^2) \cap (A\Hyp^3-Ax^3)\\  
= & \dim A(\Hyp^1-x^1) \cap A(\Hyp^2-x^2) \cap A(\Hyp^3-x^3)\\
= & \dim A((\Hyp^1-x^1) \cap (\Hyp^2-x^2) \cap (\Hyp^3-x^3))\\  
= & \dim(\Hyp^1-x^1) \cap (\Hyp^2-x^2) \cap (\Hyp^3-x^3) = n-2.\end{align*}

Thus, the gradients to $A\Si$  at the points $Ax^1,\,Ax^2,\,Ax^3\in A(\gamma)$
must be in the subspace orthogonal to an $(n-2)$-dimensional 
subspace of $\R^n$, hence they must be coplanar. Hence, $\nabla (A(\gamma))$
is coplanar.

\end{proof}


\section{Global bipolar projector}

The assertion in Remark \ref{yyy} regarding minimal wedges has its
generalization for  general closed convex cones too:

\begin{proposition}\label{altalanos}
Suppose that $\si$ is a smooth, strictly convex norm, $\Si$ is its unit sphere, 
with $P = P^\si$ the associated projection, and $R=I-P.$
Let $\mc  C$ be a closed convex cone. Then, $R\R^n$ is precisely the
union of the $\si$rays with $\si$halfspaces supporting $\mc C$ at $0$.
\end{proposition}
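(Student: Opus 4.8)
The plan is to establish a double inclusion between $R\R^n$ and $\mc A := \bigcup\{[0,x_\lambda : \Hyp^\lambda_- \text{ is a $\si$halfspace supporting } \mc C \text{ at } 0\}$, following the same geometric template already used in the proof of Proposition \ref{w1} for a minimal wedge. Recall from the second formula in \eqref{foo1} that $R\R^n = \ker(P) = \{z \in \R^n : Pz = 0\} = P^{-1}(0)$, so it suffices to identify exactly the points that $\si$-project onto $0 \in \mc C$.

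\textbf{Inclusion $\mc A \subseteq R\R^n$.} First I would fix a $\si$halfspace $\Hyp^\lambda_-$ supporting $\mc C$ at $0$, with $\si$ray $[0,x_\lambda$. Since $\mc C \subseteq \Hyp^\lambda_-$ and $0 \in \mc C \cap \Hyp^\lambda_-$, Lemma \ref{yy} applies (with $\mc C'' = \mc C \subseteq \mc C' = \Hyp^\lambda_-$ — careful, the containment needs $\mc C \subseteq \Hyp^\lambda_-$, which is exactly what "supporting at $0$" gives), so $\ker(P_{\Hyp^\lambda_-}) \subseteq \ker(P_{\mc C})$. By Theorem \ref{felter}, $\ker(P_{\Hyp^\lambda_-}) = R_{\Hyp^\lambda_-}\R^n = [0,x_\lambda$. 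Hence $[0,x_\lambda \subseteq \ker(P_{\mc C}) = R\R^n$, and taking the union over all supporting $\si$halfspaces gives $\mc A \subseteq R\R^n$.

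\textbf{Inclusion $R\R^n \subseteq \mc A$.} Let $z \in R\R^n$, so $Pz = 0$; I must exhibit a $\si$halfspace supporting $\mc C$ at $0$ whose $\si$ray contains $z$. The projection of $z$ is realized by the homothetic copy $z + d\Si$ of the unit sphere (scaled by $d = d(z,\mc C) = \si(z)$) that touches $\mc C$ in the single point $0$; here smoothness and strict convexity of $\Si$ guarantee a unique tangent hyperplane $\Hyp$ to $z + d\Si$ at $0$. By the special hyperplane separation theorem (applied to the smooth strictly convex surface $z + d\Si$ and the convex cone $\mc C$, which meet only at $0$), $\Hyp$ separates them: $\mc C \subseteq \Hyp_+$ and $(z + d\Si)\setminus\{0\} \subseteq \Hyp^-$. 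Thus $\Hyp_+$ is a closed halfspace through $0$ containing $\mc C$, i.e.\ it supports $\mc C$ at $0$. Finally, by the argument in Theorem \ref{felter} and Assertion \ref{szimnor}, the center $z$ of a homothetic sphere tangent to $\Hyp$ at $0$ lies on the $\si$ray of that supporting halfspace, so $z \in [0,x_\lambda \subseteq \mc A$.

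\textbf{Main obstacle.} The delicate point is the second inclusion: ensuring that the tangent hyperplane to the touching sphere $z + d\Si$ at $0$ genuinely produces a \emph{supporting} halfspace of $\mc C$ — i.e.\ that $\mc C$ lies entirely on one side — rather than merely a locally supporting configuration. This is where strict convexity and smoothness of $\Si$ are essential and the special hyperplane separation theorem does the work, exactly as in the wedge case; I would want to state carefully that $\mc C \cap \co(z + d\Si) = \{0\}$ (which holds because $\mc C$ is a cone touching the sphere only at $0$, hence lies outside its convex hull except at the contact point) before invoking that theorem. The rest is a transcription of the geometric reasoning already developed for Propositions \ref{w1} and \ref{w2}.
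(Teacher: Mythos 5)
Your proof is correct and follows essentially the same route as the paper: the inclusion $R\R^n\subseteq\mc A$ via the tangent sphere $z+d\Si$ at the contact point $0$ and the (special) separation theorem, and the reverse inclusion via Lemma \ref{yy} combined with Theorem \ref{felter}, exactly as in Proposition \ref{w1}. In fact you spell out the inclusion $\mc A\subseteq R\R^n$ and the verification that $\mc C\cap\co(z+d\Si)=\{0\}$ more explicitly than the paper's own (very terse) proof does, which is fine.
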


\begin{proof}
Let $x\in R\R^n,\,d(x,0)=1.$ Then, the sphere $\Si+x$ is tangent to $\mc C$ at $0$.
If $\Hyp$ is the tangent hyperplane of $\Si+x$ at $0$, then $\Hyp_-$ will be the
$\si$halfspace of the $\si$ray $[0,x $ and will support $\mc C$ at $0$.
\end{proof}

 The mapping $P^\si$ is called \emph{global bipolar projector} and the corresponding norm $\si$
a \emph{a global bipolar norm} if $P_\mc C$ and $R_\mc C=I-P_\mc C$ are
mutually polar retractions on convex cones for every closed convex cone $\mc C$,
that is if $\ker(P_\mc C) = R_\mc C(R^n)$ is a convex cone for 
any closed convex cone $\mc C$.

\begin{proposition}\label{7peld}
	Let $\mc P\subseteq \R^m$ a pointed closed convex cone, $A$ a symmetric 
	positive definite matrix and the norm $\|\cdot\|_A$ on $\R^m$ defined by 
	$\|x\|_A=\sqrt{\lng Ax,x\rng}$. Denote $P = P^{\|\cdot\|_A}_{\mc P}$ and 
	$R=I-P$. Then, $P,R$ is a pair of mutually polar retractions on convex cones.
\end{proposition}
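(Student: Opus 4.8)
The plan is to reduce the ellipsoidal norm $\|\cdot\|_A$ to the standard Euclidean norm by a nonsingular linear change of variables and then read off the kernel of the projection from the theorem of Moreau. First I would dispose of the structural part: $\|\cdot\|_A$ is a norm whose unit ball is an ellipsoid, hence it is continuous, nonnegative, positively homogeneous, vanishes only at $0$, and is strongly quasiconvex (all its nonempty sublevel sets are strictly convex ellipsoids). Therefore Theorem~\ref{footet} applies and yields $P(I-P)=0$ and $(I-P)\R^m=\ker P$, i.e. $PR=0$ and $R\R^m=\ker P$ with $R=I-P$; moreover $P$ is continuous by Remark~\ref{kone} (Proposition~4 in \cite{FN1}) and $P0=0$ because $0\in\mc P$. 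Thus $P$ is a retraction, and since $PR=0$ and $P+R=I$ we get $R^2=(P+R)R=R$, so $R$ is a retraction too; together with $P+R=I$ this makes $P,R$ mutually polar retractions. As $P\R^m=\mc P$ is the given pointed closed convex cone, the only remaining task is to show that $\ker P=R\R^m$ is a closed convex cone.

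Next I would let $B$ be the symmetric positive-definite square root of $A$, so that $A=B\tp B=B^2$ and $\|x\|_A=\|Bx\|$, where $\|\cdot\|$ is the Euclidean norm. The substitution $z=By$ in the defining formula \eqref{fookepl} turns the $\|\cdot\|_A$-projection onto $\mc P$ into a Euclidean metric projection:
\[
Px=\argmin\{\|Bx-By\|:y\in\mc P\}=B^{-1}\argmin\{\|Bx-z\|:z\in B\mc P\}=B^{-1}P_{B\mc P}(Bx),
\]
where $P_{B\mc P}$ is the Euclidean metric projection of Example~\ref{eex2} onto $B\mc P$; this is legitimate because $B$ is a linear homeomorphism, so $B\mc P$ is again a closed convex cone. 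In particular $x\in\ker P$ if and only if $P_{B\mc P}(Bx)=0$.

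Then I would apply the theorem of Moreau to the pair $B\mc P$ and $(B\mc P)\cc$, which are mutually polar closed convex cones by the Farkas lemma (since $B\mc P$ is closed). Writing $z=0+z$ with $0\in B\mc P$ and $z\in(B\mc P)\cc$ in Moreau's decomposition shows that $P_{B\mc P}(z)=0$ exactly when $z\in(B\mc P)\cc$. Hence $x\in\ker P$ iff $Bx\in(B\mc P)\cc$, and since $B$ is symmetric, $\lng Bx,Bv\rng=\lng B^2x,v\rng=\lng Ax,v\rng$ for all $v$, so this is equivalent to $\lng Ax,v\rng\le 0$ for every $v\in\mc P$, that is, to $Ax\in\mc P\cc$. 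This gives the closed formula
\[
\ker P=A^{-1}\mc P\cc,
\]
which, being the image of the closed convex cone $\mc P\cc$ under the nonsingular linear map $A^{-1}$, is a closed convex cone; combined with the first paragraph this proves the proposition. (The very same computation shows, more generally, that $\|\cdot\|_A$ is a global bipolar norm, since it used nothing about $\mc P$ beyond its being a closed convex cone.)

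I do not foresee a genuine obstacle: the argument is simply the observation that an ellipsoidal norm is a linear pullback of the Euclidean one, combined with the classical fact --- Moreau's theorem --- that the kernel of the Euclidean projection onto a closed convex cone is the polar cone. The only points that require a little care are that the change of variables really commutes with the $\argmin$ (it does, because $B$ is a bijection carrying $\mc P$ onto the closed convex cone $B\mc P$) and the identity $\lng Bx,Bv\rng=\lng Ax,v\rng$; note in particular that pointedness of $\mc P$ plays no role in the argument, even though it is assumed in the statement.
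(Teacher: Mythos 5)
Your proof is correct, but it takes a genuinely different route from the paper's. The paper's argument keeps the norm $\|\cdot\|_A$ and invokes the gradient characterization of the kernel from Theorem 5 of \cite{FN}, namely $\ker(P)=\{x\neq 0:\nabla\varphi(x)\in\mc P\cc\}\cup\{0\}$ with $\varphi=\|\cdot\|_A$; since $\nabla\varphi(x)$ is a positive multiple of $Ax$, this immediately gives $\ker(P)=A^{-1}(\mc P\cc)$. You instead pull the ellipsoidal norm back to the Euclidean one through the symmetric square root $B$ of $A$, obtain $Px=B^{-1}P_{B\mc P}(Bx)$, and read off $\ker(P)$ from Moreau's theorem, arriving at the same formula $\ker(P)=A^{-1}(\mc P\cc)$. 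Your route is more self-contained relative to this paper (it uses only Theorem \ref{footet}, Example \ref{eex2} and Moreau, rather than the external kernel characterization), it makes the closedness of $\ker(P)$ explicit, and, as you note, it shows that pointedness of $\mc P$ is irrelevant and that $\|\cdot\|_A$ is in fact a global bipolar norm --- a conclusion the paper reaches separately, in Corollary 1(ii), precisely by the $A^{-1/2}$ reduction to the Euclidean metric that underlies your computation (there routed through coherence of meridians and Theorem \ref{univers} instead of Moreau). The paper's gradient method, on the other hand, is the technique that generalizes beyond quadratic norms (it is the same device used for the weighted power-cones in Theorem \ref{ex4}), whereas your square-root substitution is specific to norms of the form $\sqrt{\lng Ax,x\rng}$. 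One cosmetic remark: the paper asserts that $A^{-1}(\mc P\cc)$ is \emph{pointed}, which actually requires $\mc P$ to be generating; your statement that one only needs $\ker(P)$ to be a closed convex cone is the accurate formulation of what the proposition requires.
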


\begin{proof}
	Clearly, $P\R^n=\mc P$ and $P^2=P$. Since $P+R=I$ and since according to
	Theorem \ref{footet} $PR=0$, 
	it follows that $R^2=R$. Since $\mc
	R = \ker(P)$, it is 
	enough to show that $\ker(P)$ is a pointed convex cone. Denote $\varphi=\|\cdot\|_A$. We 
	know (from Theorem 5 \cite{FN}) that 
	\[\ker(P)=\lf\{x\in\R^m\setminus\{0\}:\nabla\varphi(x)\in\mc
	P\cc\rg\}\cup\{0\}.\] On the other hand, for any non-zero $x\in\R^m$ we have
	$\nabla\varphi(x)\in\mc P\cc$ if and only if 
	\[Ax\in\mc P^\circ.\] Hence, $\mc R=\ker(P)=A^{-1}(\mc P^\circ)$ is a pointed
	convex cone. 
\end{proof}

\begin{theorem}\label{univers}

Consider the smooth, symmetric, strictly convex norm $\si$. 
The following assertions are equivalent:
\begin{enumerate}[(i)]
	\item  The mapping $P:=P^\si$ is a universal bipolar projector, that is, 
$P_\mc C$ and $R_\mc C=I- P_\mc C$ are mutually polar 
retractions on convex cones for every closed convex cone $\mc C$;
\item $\ker (P_\mc C)$ is convex for every closed convex cone $\mc C$;
\item $P_\mc C$ is convex additive on $R_\mc C(\R^n)$ for every
closed convex cone $\mc C$;
\item [(iv)] All the meridians of the unit sphere $\Si=\{x\in \R^n: \si(x)=1\}$
of the norm $\si$ are coherent;
\item [(v)] $P_\omega$ and $R_\omega = I-P_\omega$ are mutually polar retractions
on convex convex cones for each minimal wedge $\omega \in \Omega$.
\end{enumerate}
\end{theorem}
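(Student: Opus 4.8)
The strategy is to prove a cycle of implications, exploiting the results already established for minimal wedges so that (v) becomes the ``hard core'' that feeds everything else. The easy equivalences are (i)$\iff$(ii)$\iff$(iii): by the second formula in \eqref{foo1}, $R_{\mc C}(\R^n)=\ker(P_{\mc C})$ for every closed convex cone $\mc C$, so (i) and (ii) say the same thing; and exactly as in the proof of Theorem~\ref{ek}, convexity of $\ker(P_{\mc C})$ is equivalent to $P_{\mc C}$ being convex additive on $R_{\mc C}(\R^n)$, because $P_{\mc C}(tu+(1-t)v)=tP_{\mc C}u+(1-t)P_{\mc C}v=0$ whenever $u,v\in\ker(P_{\mc C})$. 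This handles (i)$\iff$(ii)$\iff$(iii) verbatim from the wedge case, just quantified over all $\mc C$.

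Next I would close the loop through (iv) and (v). The implication (i)$\Rightarrow$(v) is trivial, since a minimal wedge is a closed convex cone. For (v)$\iff$(iv): by Theorem~\ref{ek} (the equivalence 1$\iff$3 there), for a fixed minimal wedge $\omega=\Hyp^1_-\cap\Hyp^2_-$ the pair $P_\omega,R_\omega$ are mutually polar retractions on convex cones if and only if the meridian arc $\gamma$ joining the two $\si$ray endpoints $u,v\in\Si$ is coherent, i.e. $\dim(\spa(\nabla(\gamma)))=2$. As $\omega$ ranges over $\Omega$, these arcs $\gamma$ range over \emph{all} meridian arcs contained in an open halfsphere (via Assertion~\ref{kolcs}, every halfspace through $0$ is the $\si$halfspace of a unique $\si$ray, so every pair of points on a meridian arises as $(u,v)$), and coherence of every such sub-arc is equivalent to coherence of the full meridian; hence (v)$\iff$(iv). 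The remaining and genuinely global step is (iv)$\Rightarrow$(i): assuming all meridians of $\Si$ are coherent, I must show $\ker(P_{\mc C})$ is convex for an \emph{arbitrary} closed convex cone $\mc C$, not just a minimal wedge. Here I would invoke Proposition~\ref{altalanos}: $R_{\mc C}(\R^n)$ is the union of the $\si$rays whose $\si$halfspaces support $\mc C$ at $0$. Take $x,y\in\ker(P_{\mc C})$ with $\si(x)=\si(y)=1$; their $\si$halfspaces $\Hyp_-(a_x),\Hyp_-(a_y)$ both support $\mc C$ at $0$. The intersection $\Hyp_-(a_x)\cap\Hyp_-(a_y)$ is a minimal wedge $\omega\supseteq$ (the relevant $2$-dimensional slice of) $\mc C$, and by Lemma~\ref{yy} applied to $\mc C\subseteq\omega$ we get $\ker(P_\omega)\subseteq\ker(P_{\mc C})$; since (iv) forces $\omega$ to satisfy condition (3) of Theorem~\ref{ek}, $\ker(P_\omega)$ is convex and contains both $x$ and $y$, hence the segment $[x,y]$, hence $[x,y]\subseteq\ker(P_{\mc C})$. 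Combined with non-negative homogeneity of $P_{\mc C}$ (Proposition~\ref{homo1}), this gives convexity of $\ker(P_{\mc C})=R_{\mc C}(\R^n)$, and since $P_{\mc C}R_{\mc C}=0$ by Theorem~\ref{footet}, the pair is mutually polar retractions on convex cones.

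The main obstacle I anticipate is precisely the last implication (iv)$\Rightarrow$(i), and within it the step asserting that the supporting $\si$halfspaces of two points $x,y\in\ker(P_{\mc C})$ can be packaged into a \emph{single} minimal wedge $\omega$ containing $\mc C$ in a way that makes Lemma~\ref{yy} applicable and makes $\ker(P_\omega)$ contain the whole segment $[x,y]$. The subtlety is that the meridian arc attached to $\omega$ (via Theorem~\ref{ek}(3)) must be the one realizing the gradients along the $\si$rays through $x$ and $y$, and one has to check that coherence of \emph{that particular} arc — guaranteed by (iv) — is exactly what is needed, rather than coherence of some unrelated arc. A careful treatment should isolate the $2$-plane $\mc D=\spa\{x,y\}$ (or rather the plane spanned by the two gradients), intersect everything with $\mc D^\perp$'s complementary slice, and reduce to the wedge picture of Propositions~\ref{w1}, \ref{w2}; the point is that $\mc C$ and $\omega$ have the \emph{same} supporting halfspaces among $\{\Hyp_-(a_x),\Hyp_-(a_y)\}$, so their $\si$rays through $x$ and $y$ coincide, and the wedge's $R$-cone — a full $2$-dimensional cone between $x$ and $y$ by Proposition~\ref{w2} once coherence is known — is squeezed inside $\ker(P_{\mc C})$ by Lemma~\ref{yy}. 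Everything else is bookkeeping of the kind already carried out in the proof of Theorem~\ref{ek}.
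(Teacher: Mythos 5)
Your plan is correct and follows essentially the same route as the paper: the equivalences (i)$\iff$(ii)$\iff$(iii) via \eqref{foo1}, (iv)$\iff$(v) via Theorem~\ref{ek}, and the key global step handled exactly as in the paper by taking two points of $\ker(P_{\mc C})$, using Proposition~\ref{altalanos} to form the minimal wedge $\omega\supseteq\mc C$ from their supporting $\si$halfspaces, and applying Lemma~\ref{yy} together with the wedge-case convexity to squeeze the segment into $\ker(P_{\mc C})$. The only cosmetic difference is that you close the cycle as (iv)$\Rightarrow$(i) while the paper states it as (v)$\Rightarrow$(ii); the underlying argument is identical.
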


\begin{proof} Bearing in mind \eqref{foo1}$_2$, the equivalences (i)$\iff$(ii)$\iff$(iii) are 
	immediate consequences of the definitions and the basic properties of mutual 
	retractions.

The equivalence (iv)$\iff$(v) is the consequence of Theorem \ref{ek}.

Let us verify the equivalence (ii)$\iff$(v).

(ii)$\implies$(v) is  obvious.

Let us show that (v)$\implies$(ii).

 Suppose that 
 $u,v\in R_\mc C(\R^n) \setminus \{0\}$, where $u$ and $v$ are linearly independent.
 Suppose that $[0, u$ andf $[0,v $ are $\si$rays with $\Hyp_-(u)$ and $\Hyp_-(v)$ the corresponding $\si$halfspaces. Then, by Proposition \ref{altalanos}, $\omega = \Hyp^u_-\cap \Hyp^v_-$ is a minimal wedge and
$\mc C\subseteq \omega$. Use Lemma \ref{yy} to conclude that
$\ker(P_\omega)\subseteq \ker(P_\mc C).$ 

Since $\ker(P_\omega)$ is convex by assumption, it follows that $\ker(P_\mc C)$ is convex too.

\end{proof}

\begin{corollary} The following statements hold.

\begin{enumerate}
\item [(i)]
If $\si$ is a global bipolar norm, then so is $A\si$ for any nonsingular linear operator
$A:\R^n\to\R^n$. 
\item [(ii)] If  $\si(x)= x^TAx$ with $A$ a symmetric positive definite matrix,
then all the meridians of $\Si= \{x\in \R^n:x^TAx=1\}$ are coherent and we
get by Theorem \ref{univers} another proof of Proposition \ref{7peld}.
\end{enumerate}
\end{corollary}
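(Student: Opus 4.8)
The plan is to reduce both parts to Theorem~\ref{univers}, in particular to the equivalence (i)$\iff$(iv): a smooth, symmetric, strictly convex norm is a global bipolar norm precisely when every meridian of its unit sphere is coherent. Once that reduction is made, Lemma~\ref{fontos} does essentially all the work, since it says that a nonsingular linear map carries the unit sphere to another bounded closed strictly convex symmetric surface, carries meridians to meridians, and carries coherent meridian arcs to coherent meridian arcs.

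For (i): I would first note that if $\si$ is a smooth, symmetric, strictly convex norm with unit sphere $\Si$, then the norm $A\si$, whose unit sphere is $A\Si$, is again smooth, symmetric and strictly convex. Symmetry and the closed/bounded/strictly convex statements are exactly Lemma~\ref{fontos}(1); smoothness is preserved because $A$ is a linear bijection and hence carries the unique supporting hyperplane of $\Si$ at $x$ to the unique supporting hyperplane of $A\Si$ at $Ax$. Since $\si$ is a global bipolar norm, the implication (i)$\Rightarrow$(iv) of Theorem~\ref{univers} gives that every meridian $\Si_\mc D=\Si\cap\mc D$ of $\Si$ is coherent. By Lemma~\ref{fontos}(2) every meridian of $A\Si$ has the form $A\Si_\mc D$ (as $\mc D$ runs over all $2$-dimensional subspaces, so does $A\mc D$, because $A$ is nonsingular), and a full meridian is in particular a meridian arc, so Lemma~\ref{fontos}(3) applies and each $A\Si_\mc D$ is again coherent. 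Thus all meridians of $A\Si$ are coherent, and the implication (iv)$\Rightarrow$(i) of Theorem~\ref{univers} yields that $A\si$ is a global bipolar norm.

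For (ii): by Remark~\ref{remm1} the functions $x\mapsto x\tp Ax$ and $x\mapsto\sqrt{x\tp Ax}$ generate the same projection (composing with the increasing strongly quasiconvex $t\mapsto t^2$), so we may work with the norm $\|\cdot\|_A$. Write $A=B^2$ with $B$ symmetric positive definite; then $\|x\|_A=\|Bx\|$, so the unit sphere of $\|\cdot\|_A$ is $\Si=B^{-1}\mc S^n$, the image of the Euclidean unit sphere under the nonsingular map $B^{-1}$. Every meridian of $\mc S^n$ is coherent, since $\nabla_{\mc S^n}x=x$ and hence the gradients along $\mc S^n\cap\mc D$ all lie in the $2$-dimensional subspace $\mc D$. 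By Lemma~\ref{fontos}(3), all meridians of $\Si=B^{-1}\mc S^n$ are then coherent, so by (iv)$\Rightarrow$(i) of Theorem~\ref{univers} the norm $\|\cdot\|_A$ is a global bipolar norm; that is, for every closed convex cone $\mc C$ the maps $P^{\|\cdot\|_A}_\mc C$ and $R_\mc C=I-P^{\|\cdot\|_A}_\mc C$ are mutually polar retractions on convex cones. Specializing to $\mc C=\mc P$ recovers Proposition~\ref{7peld}. (Alternatively, (ii) is the case $\si=\|\cdot\|$ of part (i), the Euclidean norm being a global bipolar norm by Moreau's theorem as in Example~\ref{eex2}.)

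The argument is essentially bookkeeping, so there is no substantive obstacle; the only points needing care are verifying that smoothness and strict convexity of the norm survive passage to the linear image (so Theorem~\ref{univers} may be invoked for $A\si$), and observing that a full meridian qualifies as a meridian arc so that Lemma~\ref{fontos}(3) applies to it.
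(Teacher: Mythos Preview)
Your proposal is correct and follows essentially the same route as the paper: both parts are reduced to the equivalence (i)$\iff$(iv) of Theorem~\ref{univers} and then Lemma~\ref{fontos} transports coherence of meridians under a nonsingular linear map (in part (ii), from the Euclidean sphere $\mc S^n$ to $\Si$ via $A^{-1/2}$). Your write-up is more explicit than the paper's---in particular, your remarks on preservation of smoothness and strict convexity, and the alternative via Moreau's theorem, are good additions---but the underlying argument is the same.
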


\begin{proof}
The assertion (i) is via Theorem \ref{univers} the consequence of Lemma \ref{fontos}.

(ii) If $\si(x)= x^TAx$ then $A^{-1/2} \si$ (where $A^{-1/2}$ is the inverse of the square root of $A$)
becomes the Euclidean metric. Hence, the meridians of the unit sphere of $A^{-1/2}\si$ are all coherent. By
Lemma \ref{fontos} all the meridians of $\Si$ are coherent and hence $\si$ is a
global bipolar norm.
\end{proof}

\vspace{2mm}

\end{document}